\newcommand{\diam}{\mathrm{diam}}
\newcommand{\Kinfty}{\mathcal{K}_\infty}
\newcommand{\grad}{\mathrm{grad}}
\newcommand{\Ric}{\mathrm{Ric}}
\newcommand{\Id}{\mathrm{Id}}
\newcommand{\NN}{\mathbb{N}}
\newcommand{\KK}{\mathcal{K}}
\newcommand{\RR}{\mathbb{R}}
\newcommand{\ZZ}{\mathbb{Z}}
\newcommand{\complete}{{\rm cmp}}
\newcommand{\incomplete}{{\rm inc}}
\newtheorem{theorem}{Theorem}[section]
\newtheorem{corollary}[theorem]{Corollary}
\newtheorem{proposition}[theorem]{Proposition}
\newtheorem{definition}[theorem]{Definition}
\newtheorem{remark}[theorem]{Remark}
\newtheorem*{conj}{Conjecture}
\renewcommand\subsubsection{\@startsection{subsubsection}{2}%
  \z@{.5\linespacing\@plus.7\linespacing}{-.5em}%
  {\normalfont\bfseries}}
\begin{document}

\title{Quartic graphs which are Bakry-\'Emery curvature sharp}


\author[Cushing]{David Cushing}
\address{D. Cushing, Department of Mathematical Sciences, Durham University, Durham DH1 3LE, United Kingdom}
\email{davidcushing1024@gmail.com}

\author[Kamtue]{Supanat Kamtue}
\address{S. Kamtue, Department of Mathematical Sciences, Durham University, Durham DH1 3LE, United Kingdom}
\email{supanat.kamtue@durham.ac.uk}

\author[Peyerimhoff]{Norbert Peyerimhoff}
\address{N. Peyerimhoff, Department of Mathematical Sciences, Durham University, Durham DH1 3LE, United Kingdom}
\email{norbert.peyerimhoff@durham.ac.uk}

\author[Watson May]{Leyna Watson May}
\address{L. Watson May, Department of Mathematical Sciences, Durham University, Durham DH1 3LE, United Kingdom}
\email{leyna.may@durham.ac.uk}

\maketitle

\begin{abstract}
  We give a classification of all connected quartic graphs which are
  (infinity) curvature sharp in all vertices with respect to
  Bakry-\'Emery curvature. The result is based on a computer
  classification by F. Gurr and L. Watson May and a combinatorial case
  by case investigation.
\end{abstract}

\section{Introduction}
Curvature is a fundamental notion in geometry which goes back to Gauss
and Riemann and was originally defined in the smooth setting of
Riemannian manifolds. A challenging problem is to find meaningful
curvature notions in discrete settings like graphs and networks.

In this paper we focus on a specific curvature notion on a graph
$G=(V,E)$ with a vertex set $V$ and an edge set $E$, called
Bakry-\'Emery curvature (at dimension $n=\infty$). This curvature
notion is based on Bakry-\'Emery's $\Gamma$-calculus and a curvature
dimension inequality \cite{BE85}, and it was first used by
Schmuckenschl\"ager \cite{Schm99} in 1999. The crucial ingredient to
define this curvature is a natural notion of a Laplacian. In this
paper, we choose the \emph{non-normalized} graph Laplacian $\Delta$, defined
on functions $f: V \to \RR$ by
$$ \Delta f(x) = \sum_{y: y \sim x} (f(y)-f(x)). $$
Bakry-\'Emery curvature is then a real valued function
$\mathcal K_\infty(x)$ of the vertices $x\in V$, where the value
$\mathcal K_\infty(x)$ is fully determined by the combinatorial
structure of the (incomplete) $2$-ball around $x$. The precise definition of
$\mathcal K_\infty(x)$ is given in Subsection
\ref{sect:Be_curv-mot}. For readers interested in more details about
this curvature notion, see, e.g. \cite{CLP17} and the references
therein. Corollary 3.3 of \cite{CLP17} gives the following upper bound
of $\mathcal K_\infty(x)$ for a $D$-regular graph:

\begin{equation*} \label{eq:upcurvbd0}
\mathcal K_\infty(x) \le 2 + \frac{\#_\Delta(x)}{D},
\end{equation*}
where $\#_\Delta(x)$ is the number of triangles containing $x$ as a
vertex. We call a vertex $x$ \emph{(infinity) curvature sharp} if this
estimate holds with equality. The main result in the paper is a complete
classification of all $4$-regular (quartic) curvature sharp graphs:

\begin{theorem} \label{thm:main}
	Let $G=(V,E)$ be a connected quartic graph which is Bakry-\'Emery
	curvature sharp in all vertices. Then $G$ is one of the following:
	\begin{itemize}
		\item[(i)] The complete graph $K_5$ with $|V|=5$, $\Kinfty=3.5$,
		$\diam\, G = 1$;
		\item[(ii)] The octahedral graph $O$ with $|V|=6$, $\Kinfty=3$,
		$\diam\, G = 2$;
		\item[(iii)] The Cartesian product $K_3 \times K_3$ of two copies of
		the complete graph $K_3$ with $|V|=9$, $\Kinfty=2.5$, $\diam\, G =
		2$;
		\item[(iv)] The complete bipartite graph $K_{4,4}$ with $|V|=8$,
		$\Kinfty=2$, $\diam\, G = 2$;
		\item[(v)] The crown graph $C(10)$ with $|V|=10$, $\Kinfty=2$,
		$\diam\, G = 3$;
		\item[(vi)] The Cayley graph $Cay(D_{12},S)$ of the dihedral group
		$D_{12}$ of order $12$ with generators $S = \{ r^3,s,sr^2,sr^4 \}$
		with $|V|=12$, $\Kinfty=2$, $\diam\, G = 3$;
		\item[(vii)] The Cayley graph $Cay(D_{14},S)$ of the dihedral group
		$D_{14}$ of order $14$ with generators $S = \{ s,sr,sr^4,sr^6 \}$
		with $|V|=14$, $\Kinfty=2$, $\diam\, G = 3$;
		\item[(viii)] The $4$-dimensional hypercube $Q^4$ with $|V|=16$,
		$\Kinfty=2$, $\diam\, G = 4$.
	\end{itemize}
\end{theorem}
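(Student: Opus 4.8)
\section*{Proof strategy for Theorem \ref{thm:main}}

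The plan is to exploit that Bakry-\'Emery curvature sharpness at $x$ is a purely local condition on the incomplete $2$-ball $B_2(x)$, and to combine a structural reduction with a computer-assisted enumeration of the admissible local $2$-balls. The first, and most important, reduction is that curvature sharpness forces the induced subgraph on the $1$-sphere $S_1(x)$ to be $d$-regular for some $d$. Since $|S_1(x)|=4$, the only $d$-regular graphs on four vertices are $\overline{K_4}$, $2K_2$, $C_4$ and $K_4$, so $d\in\{0,1,2,3\}$ and $\#_\Delta(x)=2d$, whence $\Kinfty(x)\in\{2,\tfrac52,3,\tfrac72\}$ by the upper bound recalled in the introduction. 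A short propagation argument along edges (the $1$-sphere of any neighbour of $x$ must again be $d$-regular, and its structure is largely forced by $B_1(x)$) shows that $d$ is constant on a connected curvature sharp graph. Hence the classification splits into four cases according to the value of $d$, equivalently according to the value of $\Kinfty$.

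The cases $d=3,2,1$ are handled by direct combinatorial arguments. If $d=3$ then $\{x\}\cup S_1(x)$ induces $K_5$ for every $x$, no vertex of this $K_5$ can have a further neighbour, and connectedness gives $G=K_5$, i.e. case (i). If $d=2$, edge propagation forces $B_1(x)$ to be a $4$-cycle with a central vertex $x$ adjacent to all of it, and the curvature sharpness condition then pins $S_2(x)$ down to a single vertex adjacent to all of $S_1(x)$; assembling these local pictures yields the octahedron $O=K_{2,2,2}$, case (ii). If $d=1$, the two triangles through $x$ together with the constraints on $S_2(x)$ coming from curvature sharpness force each $B_2(x)$ to be isomorphic to the $2$-ball of $K_3\times K_3$, and a gluing argument gives $G=K_3\times K_3$, case (iii).

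The bulk of the work is the triangle-free case $d=0$, where $\Kinfty\equiv 2$. Here one first shows that curvature sharpness forces the girth to be exactly $4$ and bounds the size of the incomplete $2$-ball (each vertex of $S_2(x)$ must have at least two neighbours in $S_1(x)$, so $|S_2(x)|\le\binom{4}{2}=6$), so that there are only finitely many combinatorial possibilities for $B_2(x)$. The computer classification of F.\ Gurr and L.\ Watson May produces the complete, short list of incomplete $2$-balls compatible with curvature sharpness in the quartic triangle-free setting. One then glues: fixing a vertex $x$, its forced $2$-ball determines, via the curvature sharpness constraints imposed on each newly uncovered vertex, the $3$-ball, then the $4$-ball, and so on; the case analysis closes up and produces exactly the graphs $K_{4,4}$, the crown graph $C(10)$, $\Cay(D_{12},S)$, $\Cay(D_{14},S)$ and the hypercube $Q^4$, i.e.\ cases (iv)--(viii).

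The step I expect to be genuinely delicate is precisely this last gluing. Unlike the cases $d\ge 1$, where the presence of triangles makes the local pictures very rigid, in the triangle-free regime several admissible $2$-balls can be stitched together in more than one way, so one must track the structure globally rather than locally; ruling out all but the listed graphs (in particular distinguishing the two dihedral Cayley graphs and the crown graph from would-be larger configurations) requires careful and somewhat lengthy bookkeeping of how the local structures propagate, which is where the finiteness of the list of local types is essential. The proof then concludes by verifying directly that each of the eight graphs in the statement is curvature sharp at every vertex, with the claimed value of $\Kinfty$ and the claimed value of $\diam G$.
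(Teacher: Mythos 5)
Your overall strategy is the paper's: reduce to the finitely many curvature-sharp incomplete $2$-balls (the computer list of Gurr--Watson May), split into cases according to the number of triangles per edge (equivalently the regularity degree $d$ of the induced graph on $S_1(x)$, which is forced by $S_1$-out regularity), and then propagate/glue the admissible local structures, with the triangle-free case carrying the real work. Three points, however, need correcting or completing. First, in the cases $d=2$ and $d=1$ curvature sharpness at $x$ alone does \emph{not} pin down $S_2(x)$: the classification contains seven curvature-sharp local types with $C_4$-structure on $S_1$ and four with $2K_2$-structure, and the excess types are eliminated only by invoking curvature sharpness at \emph{other} vertices --- in the paper this is packaged as the constancy of $\#_\Delta(e)$ over all edges (e.g.\ in the spurious $C_4$-types the edge $\{v_2,v_4\}$ lies in just one triangle, contradicting $\#_\Delta(e)=2$). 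Your ``edge propagation'' remark points in this direction, but the elimination step must be made explicit. Second, your preliminary claim in the triangle-free case that every vertex of $S_2(x)$ has at least two neighbours in $S_1(x)$ is false as a local consequence of curvature sharpness: types 4.1--4.3 of the paper's table are curvature sharp at $v_0$ and contain $S_2$-vertices with a single neighbour in $S_1(v_0)$; they are only excluded later by inspecting the $2$-ball of a neighbour. This does not damage your plan, since you cite the computer list anyway, but it cannot be used as a reduction.

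The substantive gap is the hypercube case. In the local type with $S_1$-$S_2$ structure $[12],[13],[14],[23],[24],[34]$ the propagation does not close up within the $2$-ball: one subcase ($|S_3(v_0)|=3$) yields $\Cay(D_{14},S)$, but in the other subcase ($|S_3(v_0)|=4$) the paper does \emph{not} continue the gluing outward; it shows $\diam G\ge 4$ and then invokes the discrete Bonnet--Myers bound $\diam G\le 2D/K=8/2=4$ together with its rigidity statement (equality only for the hypercube) to conclude $G=Q^4$. Your proposal of pushing the local case analysis through the $3$- and $4$-spheres is in principle possible for a finite check, but it is exactly where the ``lengthy bookkeeping'' you anticipate would explode, and as written you give no mechanism for terminating it; you should either import the Bonnet--Myers rigidity theorem (as the paper does) or supply the explicit outward gluing for this subcase. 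With that ingredient added and the two local claims above repaired, your outline matches the published proof.
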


The proof of this result is based on a computer classification of all
quartic incomplete $2$-balls with non-negative curvature at their
centres. This local classification result was obtained in a 2018
LMS\footnote{London Mathematical Society} Undergraduate Research
Bursary by the last author. The revelant local results of this
research can be found in \cite{GW18} and are summarized in Section
\ref{subsec:python_results} below. They are crucial for the
combinatorial arguments given in Section \ref{sect:proof_main} to
derive the global classification result. In fact, the proof of Theorem
\ref{thm:main} is a combinatorial case by case investigation starting
with an incomplete $2$-ball with a curvature sharp center and
extending it to derive a contradiction or to end up with one of the graphs in
the above classification.

It is conceivable that the results in \cite{GW18} may
also have other applications, for example with regards to the following
conjecture about expander graph families (Conjecture 9.11 in \cite{CLP17}):

\begin{conj} Let $D \in \NN$ be fixed. Then there do {\bf{not}} exist
  increasing $D$-regular expander graphs $\{ G_k \}_{k \in \NN}$ which are
  non-negatively curved in all vertices.
\end{conj}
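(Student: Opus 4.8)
The plan is to argue by contradiction and to show that non-negative Bakry-\'Emery curvature is incompatible with a uniform spectral gap once the graphs are large. Suppose $\{G_k\}_{k\in\NN}$ were an increasing family of $D$-regular expanders with $\Kinfty(x)\ge 0$ at every vertex $x$ of every $G_k$ and with $|V(G_k)|\to\infty$. Being expanders means their (non-normalised) Laplacians enjoy a uniform spectral gap $\lambda_1(G_k)\ge\varepsilon>0$, equivalently a uniform lower bound on the Cheeger constant. The target contradiction is that $\lambda_1(G_k)\to 0$. I would split the argument into a \emph{geometric} half, in which non-negative curvature bounds eigenvalues from above via volume growth, and a \emph{combinatorial} half, in which expansion forces exponential local growth and hence eigenvalues from below, and then collide the two.

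First I would extract volume-growth control from the curvature hypothesis. The natural tool is the heat-semigroup gradient estimate implied by the $\infty$-dimensional non-negative curvature condition, namely $|\grad P_t f|^2 \le P_t|\grad f|^2$ for $P_t = e^{t\Delta}$, together with the associated Poincar\'e and Li-Yau-type machinery. In the Riemannian world $\Ric\ge 0$ yields Bishop-Gromov volume doubling and, through a Buser-type inequality, an upper bound of the shape
\[
\lambda_1(G_k) \le \frac{C(D)}{\diam(G_k)^2}.
\]
The point of this bound is that it is \emph{false} for general graphs (expanders precisely violate it), so curvature is doing real work. If, in addition, non-negative curvature forces sub-exponential (ideally polynomial) volume growth, then bounded degree and $|V(G_k)|\to\infty$ force $\diam(G_k)\to\infty$, and the displayed estimate already defeats the uniform gap $\varepsilon$.

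The combinatorial half is to confirm that expansion genuinely forces $\diam(G_k)$ to be only logarithmic while intermediate balls grow exponentially: a spectral gap $\ge\varepsilon$ yields vertex expansion $|B(x,r)|\ge (1+c(\varepsilon,D))^{r}$ for all $r$ up to order $\log|V(G_k)|$. Exponential growth at every intermediate scale is exactly what a volume-doubling conclusion from the first half forbids, since doubling caps growth at a fixed polynomial rate. This is where the local classification of \cite{GW18}, and its generalisation to arbitrary degree $D$, should enter: it shows that a non-negatively curved $D$-regular incomplete $2$-ball can only be one of finitely many combinatorial types, so every vertex carries a definite amount of local clustering (short cycles through $x$), ruling out the locally tree-like neighbourhoods that random $D$-regular expanders exhibit.

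The main obstacle is the geometric half. Unlike the finite-dimensional condition, the dimension-free hypothesis $\Kinfty\ge 0$ does \emph{not} by itself give a Bishop-Gromov volume comparison, even in the smooth setting, so there is no off-the-shelf volume bound to quote; this is precisely why the statement is only a conjecture. I expect that resolving it will require upgrading the finite list of admissible local $2$-balls into a genuine growth estimate --- for instance, showing that the forced density of short cycles supplies an effective finite dimension $N=N(D)$, thereby converting $\Kinfty\ge 0$ into a usable finite-dimensional curvature-dimension inequality, after which the standard Bishop-Gromov and Buser arguments close the contradiction. Establishing that implication rigorously, uniformly across the family and independently of $k$, is the crux and the part I would expect to occupy most of the work.
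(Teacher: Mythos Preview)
The paper does not prove this statement at all: it is presented explicitly as an open \emph{conjecture} (Conjecture~9.11 of \cite{CLP17}), and the surrounding text merely suggests that the local classification results in \cite{GW18} \emph{might} be useful to attack the case $D=4$. There is therefore no proof in the paper for your proposal to be compared against.

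As for the content of your proposal itself, you have written a candid research programme rather than a proof, and you correctly identify the decisive gap yourself: the inequality $\lambda_1 \le C(D)/\diam^2$ (or, equivalently, a volume-doubling or polynomial-growth statement) is \emph{not} known to follow from $CD(0,\infty)$ on graphs, and in the smooth setting $CD(0,\infty)$ without a dimension bound does not yield Bishop--Gromov. Everything downstream of that step is contingent on it. Your idea of promoting the finite local list of admissible $2$-balls to an effective finite dimension $N(D)$ is an interesting heuristic, but there is no mechanism offered for why local $2$-ball control should propagate to a global curvature-dimension inequality $CD(0,N)$; the paper itself is careful to say only that the local data ``might be useful to derive specific properties contradicting the existence of expander families like, e.g., polynomial volume growth of metric balls'', without claiming to know how. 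In short, your outline isolates the right obstruction but does not overcome it, which is consistent with the statement's status as a conjecture.
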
  

In the case $D=3$ (cubic graphs), it was shown in
\cite[Theorem~1.1]{CKLLS17} that the only finite connected cubic
graphs of non-negative Bakry-\'Emery curvature are the prism graphs
and the M\"obius ladders and, therefore, the conjecture is true for
$D=3$. It would be an interesting project to investigate whether the
results in \cite{GW18} can be used to verify the conjecture in the case
$D=4$. A full classification of all finite connected non-negatively
curved quartic graphs is likely to be out of range due to the large
number of local combinatorial possibilities to construct such
graphs. However, \cite{GW18} might be useful to derive specific
properties contradicting the existence of expander families like,
e.g., polynomial volume growth of metric balls.

Let us finish this introduction with an overview about the structure
of this paper. In Section \ref{sect: BE_curv}, we introduce
Bakry-\'Emery curvature and all other relevant notions and present
some crucial results needed for the proof of Theorem
\ref{thm:main}. The proof of Theorem \ref{thm:main} is given in Section
\ref{sect:proof_main}.

\section{Bakry-\'Emery curvature} \label{sect: BE_curv}

\subsection{Motivation of Bakry-\'Emery curvature} \label{sect:Be_curv-mot}
Readers not familiar with Riemannian
manifolds can skip the following explanation and go directly to Definition
\ref{def:curv} below.

Bakry-\'Emery
curvature is a general (Ricci) curvature notion which can be motivated
via the following \emph{curvature-dimension inequality} on an
$n$-dimensional Riemannian manifold $(M,\langle \cdot,\cdot \rangle)$
whose Ricci curvature at $x$ satisfies $\Ric_x(v) \ge K_x |v|^2$ for all
tangent vectors $v \in T_xM$:
$$
\frac{1}{2} \Delta \vert \grad\, f|^2 (x) - \langle \grad\, \Delta
f(x), \grad\, f(x) \rangle \ge \frac{1}{n}(\Delta f(x))^2 + K_x
\Vert \grad\, f(x) \Vert^2.
$$
This pointwise inequality holds for all smooth functions $f$ and is a
straightforward consequence of \emph{Bochner's formula}, a fundamental
fact in Riemannian Geometry (for Bochner's formula see, e.g.,
\cite{GHL04}). Using Bakry-\'Emery's $\Gamma$-calculus, this
inequality can be reformulated as
\begin{equation} \label{eq:cd-ineq}
\Gamma_2(f,f)(x) \ge \frac{1}{n}(\Delta f(x))^2 + K_x
\Gamma(f,f)(x) \quad \forall\, f,
\end{equation}
where the symmetric bilinear forms $\Gamma$ and $\Gamma_2$ of two
smooth function $f,g: M \to \RR$ are defined as
\begin{eqnarray}
  2 \Gamma(f,g) &=& \Delta(fg) - f\Delta g - g\Delta f = \langle \grad\,
  f,\grad\, g \rangle, \label{eq:gamma} \\
  2 \Gamma_2(f,g) &=& \Delta \Gamma(g,f) - \Gamma(f,\Delta g) -
  \Gamma(g,\Delta f). \label{eq:gamma2}
\end{eqnarray}
Note that $\Gamma$ and $\Gamma_2$ can be defined for any space
admitting a reasonable Laplace operator $\Delta$. The idea is to use
inequality \eqref{eq:cd-ineq} to define lower Ricci curvature bounds
at all points of general spaces admitting Laplace operators. In the
case of an arbitrary (not necessarily regular) graph $G=(V,E)$ with
vertex set $V$ and edge set $E$, there is a natural way to introduce a
Laplace operator via its adjacency matrix $A_G$, namely
$$ \Delta = A_G - D\cdot \Id,$$
where $D$ is a diagonal matrix containing the respective vertex
degrees. This operator $\Delta$ is called the \emph{non-normalized
  graph Laplacian} and can also be viewed as a linear operator on the
space of functions on the vertices. It is straightforward to see that
the Laplacian of a function $f: V \to \RR$ is then given by
\begin{equation} \label{eq:norm-Lap}
\Delta f(x) = \sum_{y: y \sim x} (f(y)-f(x)),
\end{equation}
where $y \sim x$ means that the vertices $x$ and $y$ are adjacent.

Note that inequality \eqref{eq:cd-ineq} involves a dimension parameter
$n$, and it is not clear how to choose the dimension for a given graph
$G$. If we do not fix the dimension parameter $n$, \eqref{eq:cd-ineq}
induces a lower Ricci curvature notion at a vertex $x\in V$ \emph{as a
  function of the dimension}. This viewpoing was taken in \cite{CLP17}
and it easy to see that this pointwise curvature function is
monotone increasing in $n$ and assumes a finite limit as $n \to
\infty$. We refer to the limit as the Bakry-\'Emery curvature (at infinity)
$\mathcal K_\infty(x)$ at the vertex $x$. This limit value can also be
directly obtained by dropping the term involving the dimension
parameter in \eqref{eq:cd-ineq}.

\begin{definition} \label{def:curv}
Let $G=(V,E)$ be a graph and $\Delta$ be the associated Laplacian defined in \eqref{eq:norm-Lap}. Let $\Gamma$ and $\Gamma_2$ be the forms defined in \eqref{eq:gamma} and \eqref{eq:gamma2}. Then the Bakry-\'Emery curvature
$\mathcal K_\infty(x)$ at a vertex $x$ is the supremum of all values $K\in \RR$ satisfying
$$ \Gamma_2(f,f)(x) \ge K \Gamma(f,f)(x) \quad \forall\, f: V \to \RR. $$ 
Moreover, if we have $\mathcal K_\infty(x) \ge \KK$ at all vertices $x \in V$ for some value $\KK \in \RR$, we say that $G$ satisfies the (global) curvature-dimension inequality $CD(\KK,\infty)$.
\end{definition}

A natural class of connected regular graphs satisfying $CD(0,\infty)$
are all abelian Cayley graphs (see \cite{KKRT16} and references therein) and a
prominent example with vanishing Bakry-\'Emery curvature at all
vertices is the infinite grid $\ZZ^n$ with generators $\pm e_j$,
$j=1,\dots,n$.

\subsection{Fundamental properties of Bakry-\'Emery curvature}

Before we present some fundamental properties of Bakry-\'Emery
curvature, we need to introduce some relevant notation. All graphs $G =
(V,E)$ are assumed to be \emph{connected}, i.e., there is a path between any pair of vertices in $V$. The degree of a vertex $x$ is denoted by $d_x\in \mathbb{N}$, and a graph $G$ is called \emph{$D$-regular} if $d_x=D$ for all $x\in V$. The \emph{combinatorial distance} $d(x,y)$ between
two vertices $x,y \in V$ is then the length of the shortest path from
$x$ to $y$. The \emph{diameter} of $G$ is defined as $$\diam(G)=\max_{x,y\in V} d(x,y).$$
Spheres and balls around a vertex $x \in
V$ are defined via
\begin{eqnarray*}
  S_k(x) &=& \{ y \in V \mid d(x,y) = k \}, \\
  B_k(x) &=& \{ y \in V \mid d(x,y) \le k \}.
\end{eqnarray*}
The $2$-ball $B_2(x)$ has the following decomposition into spheres
$$ B_2(x) = \{x\} \sqcup S_1(x) \sqcup S_2(x). $$
We call an edge $\{y,z\} \in E$ a \emph{spherical edge} (w.r.t. $x$)
if $d(x,y) = d(x,z)$, and a \emph{radial edge} otherwise. Moreover,
the following values associated a reference vertex $x \in V$ are
relevant:
\begin{align*}
d_{x}^{-}(y) &= |\{z\sim y : d(x,y) = d(x,z) + 1\}|,\\
d_{x}^{0}(y) &= |\{z\sim y : d(x,y) = d(x,z) \}|,\\
d_{x}^{+}(y) &= |\{z\sim y : d(x,y) = d(x,z) - 1\}|,
\end{align*}
which we call the {\it in-degree, spherical degree, out-degree} of
$y$, respectively. Note that $d_y=d_{x}^{-}(y)+d_{x}^{0}(y)+d_{x}^{+}(y)$.

\begin{definition}
  We say that $G$ is \emph{$S_1$-out regular} at a vertex $x$, if all
  the vertices $y$ in $S_1(x)$ have the same out-degree
  $d^{+}_{x}(y)$.
\end{definition}
  
The \emph{complete 2-ball} around $x$, denoted by $B_2^\complete(x)$, is the induced subgraph of $B_2(x)$. Furthermore, the \emph{incomplete 2-ball} around $x$, denoted by $B_2^\incomplete(x)$, is obtained from $B_2^\complete(x)$ with all spherical edges w.r.t. $x$ within $S_2(x)$ being removed. It is important to note that Bakry-\'Emery curvature $\mathcal K_\infty(x)$ at a vertex $x \in V$ is a local value, and it is already determined by the structure of incomplete $2$-ball $B_2^\incomplete(x)$. As explained in \cite[Section 3.4]{CKLLS17}, the explicit calculation of Bakry-\'Emery curvature at a vertex is a semidefinite programming problem implemented in the interactive curvature calculator which can be found at \url{http://www.mas.ncl.ac.uk/graph-curvature}.

In \cite{CLP17}, the authors give an upper bound for Bakry-\'Emery curvature at a vertex in a $D$-regular graph, and then define the notion of a curvature sharp vertex as follows:

\begin{theorem}\textup{(\cite[Corollary~3.3]{CLP17})}
Let $G=(V,E)$ be a $D$-regular graph. Then Bakry-\'Emery curvature (of dimension $n=\infty$) at any vertex $x \in V$ satisfies
\begin{equation} \label{eq:upcurvbd}
\mathcal K_\infty(x) \le 2 + \frac{\#_\Delta(x)}{D}.
\end{equation}

Moreover, a vertex $x \in V$ is called
(infinity) curvature sharp if \eqref{eq:upcurvbd} holds with
equality.
\end{theorem}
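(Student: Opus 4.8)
The plan is to bound $\mathcal K_\infty(x)$ from above by testing the inequality in Definition \ref{def:curv} against a single, carefully chosen function. Since $\Gamma(f,f)(x)=\tfrac12\sum_{y\sim x}(f(y)-f(x))^2\ge 0$ for every $f$, the curvature can be rewritten as
\[
\mathcal K_\infty(x)=\inf\Bigl\{\tfrac{\Gamma_2(f,f)(x)}{\Gamma(f,f)(x)}\ :\ f\colon V\to\RR,\ \Gamma(f,f)(x)>0\Bigr\},
\]
so it suffices to exhibit one function $f$ with $\Gamma(f,f)(x)>0$ for which this ratio equals $2+\#_\Delta(x)/D$. I would take $f$ to be the truncated distance function from $x$, namely $f(v)=\min\{d(x,v),2\}$, i.e.\ $f(x)=0$, $f\equiv 1$ on $S_1(x)$, and $f\equiv 2$ on $S_2(x)$. (As noted above, $\Gamma_2(f,f)(x)$ depends only on $f|_{B_2(x)}$, so values of $f$ further away are irrelevant.) For this $f$ one has $\Gamma(f,f)(x)=\tfrac12\sum_{y\sim x}1=\tfrac D2>0$, and the whole task reduces to checking $\Gamma_2(f,f)(x)=D+\tfrac12\#_\Delta(x)$.

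To evaluate $2\Gamma_2(f,f)(x)=\Delta\Gamma(f,f)(x)-2\Gamma(f,\Delta f)(x)$ (this is \eqref{eq:gamma2} with $g=f$) one only needs $f$, $\Delta f$ and $\Gamma(f,f)$ on $\{x\}\cup S_1(x)$. A short computation from \eqref{eq:norm-Lap} and \eqref{eq:gamma} gives $\Delta f(x)=D$ and, for each $y\sim x$,
\[
\Delta f(y)=d_x^{+}(y)-1,\qquad \Gamma(f,f)(y)=\tfrac12\bigl(1+d_x^{+}(y)\bigr),
\]
because among the $D$ neighbours of $y$ exactly one is $x$ (with $f$-value $0$), $d_x^{0}(y)$ lie in $S_1(x)$ (with $f$-value $1$), and $d_x^{+}(y)$ lie in $S_2(x)$ (with $f$-value $2$). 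Substituting these into $\Delta\Gamma(f,f)(x)=\sum_{y\sim x}\bigl(\Gamma(f,f)(y)-\tfrac D2\bigr)$ and $\Gamma(f,\Delta f)(x)=\tfrac12\sum_{y\sim x}(f(y)-f(x))(\Delta f(y)-\Delta f(x))$ reduces everything to an expression in $D$ and $\sum_{y\sim x}d_x^{+}(y)$ alone. Finally I would use the two combinatorial identities
\[
\sum_{y\sim x}d_x^{0}(y)=2\,\#_\Delta(x),\qquad \sum_{y\sim x}d_x^{+}(y)=D(D-1)-2\,\#_\Delta(x),
\]
the first because spherical edges within $S_1(x)$ correspond bijectively to triangles at $x$, and the second because $d_x^{-}(y)=1$, hence $d_x^{0}(y)+d_x^{+}(y)=D-1$, for every $y\in S_1(x)$. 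This yields $\Delta\Gamma(f,f)(x)=-\#_\Delta(x)$ and $\Gamma(f,\Delta f)(x)=-D-\#_\Delta(x)$, so $2\Gamma_2(f,f)(x)=2D+\#_\Delta(x)$ and
\[
\frac{\Gamma_2(f,f)(x)}{\Gamma(f,f)(x)}=\frac{D+\tfrac12\#_\Delta(x)}{D/2}=2+\frac{\#_\Delta(x)}{D},
\]
which gives the claimed bound (and shows it is attained exactly when the distance function from $x$ realises the infimum defining $\mathcal K_\infty(x)$).

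The only genuine difficulty is spotting the right test function. Cheaper candidates --- an indicator $\mathbf 1_{\{y\}}$, a function constant on $S_1(x)$, a random $\pm1$ assignment on $S_1(x)$, or a trace estimate on the Schur complement of the $\Gamma_2$-matrix over the $S_2(x)$-coordinates --- all produce strictly weaker bounds in general (of order $D$ rather than the required $\le(D+3)/2$), essentially because they handle the $S_2(x)$-values badly. Once one observes that the truncated distance function removes all the slack at once, everything that follows is routine bookkeeping with the definitions of $\Delta$, $\Gamma$ and $\Gamma_2$.
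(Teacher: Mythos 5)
Your proof is correct: all the ingredients check out. With $f=\min\{d(x,\cdot),2\}$ one indeed gets $\Gamma(f,f)(x)=D/2$, $\Gamma(f,f)(y)=\tfrac12(1+d_x^+(y))$ and $\Delta f(y)=d_x^+(y)-1$ for $y\sim x$, and using $d_x^-(y)=1$, $\sum_{y\sim x}d_x^0(y)=2\#_\Delta(x)$ one obtains $\Delta\Gamma(f,f)(x)=-\#_\Delta(x)$, $\Gamma(f,\Delta f)(x)=-D-\#_\Delta(x)$, hence $\Gamma_2(f,f)(x)=D+\tfrac12\#_\Delta(x)$ and the ratio $2+\#_\Delta(x)/D$. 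Note that only the inequality $\mathcal K_\infty(x)\le \Gamma_2(f,f)(x)/\Gamma(f,f)(x)$ for this single $f$ is needed, which follows directly from Definition \ref{def:curv} since $\Gamma(f,f)(x)>0$; you do not need the full infimum characterisation you state at the outset, so no care about functions with $\Gamma(f,f)(x)=0$ is required. Be aware that the paper itself offers no proof of this statement --- it is quoted from \cite[Corollary~3.3]{CLP17} --- so there is no internal argument to compare against; your computation is essentially the standard one underlying the cited bound (testing the curvature-dimension inequality at $x$ with a distance-type function that is $0$ at $x$, $1$ on $S_1(x)$ and $2$ on $S_2(x)$), and the final remark about equality is immaterial since ``curvature sharp'' is simply defined by equality in \eqref{eq:upcurvbd}.
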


From Corollary 5.11 in \cite{CLP17}, curvature sharp at $x$ implies $S_1$-out regularity at $x$. Moreover, the following proposition says that if $S_1$-out regularity is assumed everywhere, then the number of triangles involving a vertex (or an edge) is uniform.

\begin{proposition} \label{prop: number_triangles}
Let $G=(V,E)$ be a connected $D$-regular graph which is $S_1$-out regular in all vertices. Then there is $c_1,c_2\in \ZZ$ such that $\#_\Delta(\{x,y\})=c_1$ and $\#_\Delta(x)=c_2$, for all $x\in V$ and for all edges $\{x,y\}\in E$.
\end{proposition}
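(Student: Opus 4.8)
The plan is to express the edge–triangle count $\#_\Delta(\{x,y\})$ in terms of the local degrees $d_x^-,d_x^0,d_x^+$ introduced above, to observe that $S_1$-out regularity forces this count to depend only on one of the two endpoints of the edge, and then to use the obvious symmetry of the count together with connectedness to promote it to a global constant. The vertex–triangle count $\#_\Delta(x)$ will then follow by a short double-counting argument.

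First I would note that for an edge $\{x,y\}\in E$ a triangle containing this edge is exactly a common neighbour of $x$ and $y$, so $\#_\Delta(\{x,y\})$ equals the number of neighbours $z$ of $y$ with $d(x,z)=d(x,y)=1$; that is,
$$ \#_\Delta(\{x,y\}) = d_x^0(y), $$
the spherical degree of $y$ with respect to $x$. Since $y\in S_1(x)$, the only neighbour of $y$ at distance $0$ from $x$ is $x$ itself, so $d_x^-(y)=1$, and $D$-regularity together with $d_y=d_x^-(y)+d_x^0(y)+d_x^+(y)$ gives
$$ \#_\Delta(\{x,y\}) = d_x^0(y) = D-1-d_x^+(y). $$
By $S_1$-out regularity at $x$, the out-degree $d_x^+(y)$ takes one common value over all $y\in S_1(x)$; hence the right-hand side depends only on $x$, and we may write $\#_\Delta(\{x,y\})=f(x)$ for some function $f\colon V\to\ZZ$.

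Next I would use that $\#_\Delta(\{x,y\})$ is manifestly symmetric in $x$ and $y$: for every edge $\{x,y\}$ we obtain $f(x)=\#_\Delta(\{x,y\})=f(y)$. As $G$ is connected, this forces $f$ to be constant, say $f\equiv c_1\in\ZZ$, proving the first assertion. For the second assertion I would count incidences between the triangles through $x$ and the edges incident to $x$: each such triangle contributes exactly two edges at $x$, so
$$ 2\,\#_\Delta(x) = \sum_{y\,:\,y\sim x}\#_\Delta(\{x,y\}) = D c_1, $$
whence $\#_\Delta(x)=Dc_1/2=:c_2$ is independent of $x$, and $c_2\in\ZZ$ since it counts triangles. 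I do not expect a genuine obstacle; the only point deserving care is that $S_1$-out regularity is an intrinsically one-sided condition based at a single vertex, so it alone gives only $\#_\Delta(\{x,y\})=f(x)$, and the statement becomes global precisely because the edge count is symmetric and the graph is connected.
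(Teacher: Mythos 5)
Your proof is correct and follows essentially the same route as the paper's: both hinge on the identity $\#_\Delta(\{x,y\}) = D-1-d^+(y)$ (the paper writes it as $d^+_y(x)=D-1-\#_\Delta(\{x,y\})$ and chains along a path, you phrase it as a function $f(x)$ constant on edges at $x$ and then use symmetry plus connectedness), and both conclude with the identical double count $2\,\#_\Delta(x)=\sum_{y\sim x}\#_\Delta(\{x,y\})=Dc_1$. No gaps.
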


\begin{proof}
Let $x\sim y \sim z$ be any two connected edges in $G$. Observe that
\begin{align*}
d^+_y(x)&=D-1-\#_\Delta(\{x,y\})\\
d^+_y(z)&=D-1-\#_\Delta(\{y,z\}).
\end{align*}
Then $S_1$-out regularity at $y$ means $d^+_y(x)=d^+_y(z)$, which implies that  $\#_\Delta(\{x,y\})=\#_\Delta(\{y,z\})$.

Let $x\sim y$ and $x'\sim y'$ be two arbitrary edges in $G$. Consider a connected path from the edge $x\sim y$ to the edge $x'\sim y'$, namely $$x\sim y \sim v_0 \sim ... \sim v_n \sim x' \sim y'.$$ 
Then $S_1$-out regularity at $y$, at $v_i$'s, and at $x'$ altogether implies that $$\#_\Delta(\{x,y\})= \#_\Delta(\{y,v_0\})=...=\#_\Delta(\{v_n,x'\})=\#_\Delta(\{x',y'\}),$$ so $\#_\Delta(\{x,y\})\equiv c_1$, for some constant $c_1$. Moreover, counting triangles containing $x$ gives $$\#_\Delta(x)=\frac{1}{2} \sum_{y:\ y\sim x}\#_\Delta(\{x,y\})=\frac{1}{2}c_1 D,$$
which is also a constant i.e., $c_2:=\frac{1}{2}c_1 D$.
\end{proof}

\begin{corollary}
  Every connected $D$-regular graph $G = (V,E)$ which is curvature
  sharp in all vertices $x$ has constant curvature, that is, $\mathcal K_\infty(x) = \mathcal K$ for all $x \in V$.
\end{corollary}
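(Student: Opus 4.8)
The plan is simply to chain together the three results recalled immediately before the statement. First I would invoke the implication (Corollary~5.11 in \cite{CLP17}) that curvature sharpness at a vertex $x$ forces $S_1$-out regularity at $x$. Since by hypothesis $G$ is curvature sharp at \emph{every} vertex, this shows that $G$ is $S_1$-out regular at every vertex, so the hypothesis of Proposition~\ref{prop: number_triangles} is met.

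Next I would apply Proposition~\ref{prop: number_triangles} to obtain a constant $c_2 \in \ZZ$ with $\#_\Delta(x) = c_2$ for all $x \in V$. Finally, curvature sharpness at $x$ means precisely that the estimate \eqref{eq:upcurvbd} is attained, i.e.
$$ \mathcal K_\infty(x) = 2 + \frac{\#_\Delta(x)}{D} = 2 + \frac{c_2}{D}, $$
which is independent of $x$. Setting $\mathcal K := 2 + c_2/D$ finishes the argument.

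I do not anticipate any genuine obstacle here: the corollary is an immediate consequence of the preceding material, and the only point requiring minor care is to cite the correct statement for the implication ``curvature sharp $\Rightarrow$ $S_1$-out regular'' and to note that it is being used at all vertices simultaneously so that the global $S_1$-out regularity hypothesis of Proposition~\ref{prop: number_triangles} applies.
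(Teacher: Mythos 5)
Your argument is correct and is essentially the paper's proof: the paper likewise applies Proposition \ref{prop: number_triangles} (implicitly using the earlier remark that curvature sharpness implies $S_1$-out regularity) to get $\#_\Delta(x)$ constant, and then concludes from equality in \eqref{eq:upcurvbd}. You merely spell out the $S_1$-out regularity step more explicitly, which the paper leaves implicit.
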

\begin{proof}
	$\#_\Delta(x)$ is constant for all vertices $x\in V$ by Proposition \ref{prop: number_triangles}, and the result follows from \eqref{eq:upcurvbd}.
\end{proof}

Finally, we need the following \emph{combinatorial analogue} of the classical
Bonnet-Myers Theorem from Riemannian Geometry (see, e.g.,
\cite{GHL04}) and its associated rigidity result by Cheng \cite{Ch75}.

\begin{theorem}\label{thm:BMrigidity}\textup{(\cite[Proposition~1.3 and Theorem~1.4]{rigidity})} Let $G=(V,E)$ be a connected $D$-regular graph with $K:=\inf_{x\in V} \mathcal K_\infty(x)>0$. Then $G$ satisfies Bonnet-Myers' diameter bound
$$\diam(G)\le \frac{2D}{K},$$ which holds with equality if and only if $G$ is a $D$-dimensional hypercube. 
\end{theorem}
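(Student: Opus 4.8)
The plan is to obtain the diameter bound from the semigroup gradient estimate attached to $CD(K,\infty)$, and then to derive the rigidity statement by tracking which of the inequalities in that argument are forced to be equalities when the diameter bound is attained.

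For the diameter bound I would work with the heat semigroup $P_t=e^{t\Delta}$. Since $G$ is finite and connected, $P_t$ is a Markov operator and $P_tf\to\bar f:=|V|^{-1}\sum_{v\in V}f(v)$ as $t\to\infty$, while $\tfrac{d}{dt}P_tf=\Delta P_tf$. The analytic input is the well-known reformulation of $CD(K,\infty)$ as the gradient estimate $\Gamma(P_tf)\le e^{-2Kt}\,P_t\Gamma(f)$. On a $D$-regular graph, Cauchy--Schwarz gives $|\Delta g(x)|^2\le d_x\sum_{y\sim x}(g(y)-g(x))^2=2D\,\Gamma(g)(x)$, so for every $f$ with $\Gamma(f)\le1$ pointwise one gets $|\Delta P_tf(x)|\le\sqrt{2D}\,e^{-Kt}$. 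Integrating the identity $f-\bar f=-\int_0^\infty\Delta P_tf\,dt$ would then give $\max f-\min f\le 2\sqrt{2D}/K$ for all such $f$. Finally I would apply this to $f=\sqrt{2/D}\,d(x_0,\cdot)$, which satisfies $\Gamma(f)(v)=\tfrac1D\sum_{w\sim v}(d(x_0,w)-d(x_0,v))^2\le1$ because consecutive distances differ by at most $1$; choosing $x_0$ with $\max_v d(x_0,v)=\diam(G)$ turns $\max f-\min f\le2\sqrt{2D}/K$ into $\diam(G)\le 2D/K$.

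The easy half of the rigidity is to check that $Q^D$ attains equality: it has $\mathcal K_\infty\equiv2$ (a known computation; $Q^D$ is the $D$-fold Cartesian power of $K_2$, Bakry--\'Emery curvature of such a product is the minimum over the factors, and $\mathcal K_\infty(K_2)=2$), hence $K=2$ and $\diam(Q^D)=D=2D/K$. For the converse I would assume $\diam(G)=2D/K$, fix a diametral pair $x_0,z$, and re-run the estimate above with $f=\sqrt{2/D}\,d(x_0,\cdot)$: the equality $\diam(G)=2D/K$ forces $|f(x_0)-\bar f|=|f(z)-\bar f|=\sqrt{2D}/K$, and hence every inequality used along the way to be tight. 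From tightness of $P_t\Gamma(f)(x_0)\le1$, together with $p_t(x_0,\cdot)>0$, I would deduce $\Gamma(f)\equiv1$ on all of $V$, i.e.\ there are no spherical edges with respect to $x_0$ and $d^-_{x_0}(v)+d^+_{x_0}(v)=D$ for every $v$; from tightness of Cauchy--Schwarz, that $P_tf$ has equal increments along all edges at $x_0$ and at $z$; and from tightness of the gradient estimate, that $\Gamma_2=K\Gamma$ is saturated on the relevant test functions. The first of these already gives $S_1$-out regularity at $x_0$; after using the equality analysis to show $G$ is self-centred (so that every vertex serves as the base point of a diametral geodesic), I would obtain $S_1$-out regularity at every vertex, and hence $\mathcal K_\infty\equiv K$ by Proposition~\ref{prop: number_triangles} and its corollary. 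Combining the layered distance structure around every vertex with the saturated curvature inequality should pin down each incomplete $2$-ball $B_2^{\incomplete}(x)$ to be exactly that of a hypercube (in particular $K=2$), after which a reconstruction step --- either by induction on $D$, splitting off a parallel class of edges that exhibits a $Q^{D-1}\times K_2$ decomposition, or by building an explicit isometry onto $\{0,1\}^D$ with Hamming adjacency --- yields $G\cong Q^D$.

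The diameter bound is routine once the gradient estimate is available. The main obstacle will be the rigidity direction: converting the many equality conditions into enough local and global rigidity --- in particular self-centredness and the hypercube shape of every incomplete $2$-ball --- so that the concluding reconstruction step goes through.
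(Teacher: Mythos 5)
This statement is not proved in the paper at all: it is imported verbatim from Liu--M\"unch--Peyerimhoff \cite{rigidity} (their Proposition~1.3 and Theorem~1.4), so the only fair comparison is with that source. Your first half is fine and is essentially the known argument of \cite{LMP16}: the semigroup reformulation $\Gamma(P_tf)\le e^{-2Kt}P_t\Gamma(f)$ of $CD(K,\infty)$ does hold on finite graphs by the standard interpolation $s\mapsto e^{-2Ks}P_s\Gamma(P_{t-s}f)$ (no chain rule needed), Cauchy--Schwarz gives $|\Delta g|^2\le 2D\,\Gamma(g)$ at a vertex of degree $D$, and feeding the rescaled distance function $\sqrt{2/D}\,d(x_0,\cdot)$ into the resulting oscillation bound yields $\diam(G)\le 2D/K$. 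Your equality bookkeeping at the two diametral endpoints is also legitimate as far as it goes: strict positivity of the heat kernel gives $\Gamma(f)\equiv 1$, hence no spherical edges with respect to $x_0$, so $G$ is bipartite, triangle-free, and therefore $K\le 2$ by the upper bound \eqref{eq:upcurvbd}, and one gets $S_1$-out regularity at $x_0$ and $z$ together with saturation of $\Gamma_2=K\Gamma$ along the functions $P_tf$.

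The genuine gap is that everything after this point --- which is the actual content of the rigidity half of the theorem --- is asserted rather than derived. Concretely: (i) you give no argument for self-centredness, which you need in order to transport the ``no spherical edges'' and out-regularity conclusions from the special pair $(x_0,z)$ to every vertex; the equality hypothesis a priori only concerns one diametral pair. (ii) You give no argument that the saturated curvature condition forces $K=2$ (triangle-freeness only gives $K\le 2$) nor that it forces each incomplete $2$-ball to be that of $Q^D$. (iii) Even granted hypercube-shaped $2$-balls at every vertex, local structure alone does not determine the graph (folded cubes, for instance, are locally hypercube-like in high degree), so the concluding ``reconstruction step'' must genuinely re-use the diameter equality or the semigroup identities, and you only name two possible strategies without carrying either out. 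These three steps are precisely where the proof in \cite{rigidity} does its real work, through a substantially longer structural analysis than the tightness bookkeeping you sketch. So the proposal establishes the inequality and the easy ``$Q^D$ attains equality'' direction, but not the converse rigidity statement, and in that sense it does not yet prove the theorem being cited.
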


\subsection{Incomplete $2$-balls with non-negative curvature at centers}
\label{subsec:python_results}

In this subsection, we survey the relevant computational results about
Bakry-\'Emery curvature from \cite{GW18}. They are based on a computer
program in Python written by the last author during a 2018 LMS
Undergraduate Research Bursary.

Firstly, we explain the representations of $2$-balls in quartic graphs that were used for these calculations. We fix a vertex $v_0$ and $S_1(v_0)=\{v_1,v_2,v_3,v_4\}$ (since in a quartic graph, $v_0$ has four neighbors). The vertices of the $2$-sphere are labeled as follows: $S_2(v_0)=\{v_5,v_6,...,v_m\}$. Then a $2$-ball $B_2(v_0)$ centered at $v_0$ is represented by a list of 3 lists: $B_2(v_0)=[list_1,list_2, list_3]$. The first list determines the $S_1$ structure (i.e., how the vertices in $S_1$ are connected to each other) by $list_1=[a_{12},a_{13},a_{14},a_{23},a_{24},a_{34}]$ where each $a_{ij}\in\{0,1\}$ is a Boolean indicator whether vertices $v_i$ and $v_j$ are adjacent or not. The second list $list_2=[a_5,a_6,...,a_m]$ describes the $S_1\mbox{-}S_2$ structure (i.e., which vertices in $S_2$ are adjacent to vertices in $S_1$). For instance, $a_5=[123]$ means that the vertex $v_5$ is adjacent to $v_1,v_2,v_3$ but not to $v_4$. Lastly, the list $list_3$ describes the $S_2$ structure (i.e., how the vertices in $S_2$ are connected to each other). For example, $list_3=[[57], [58], [68]]$ means that $v_5\sim v_7$, $v_5\sim v_8$, and $v_6\sim v_8$. However, the computation of Bakry-\'Emery curvature only requires the information of incomplete $2$-balls where no spherical edge of $S_2(v_0)$ is present, in which case $list_3=[\ ]$.
We refer to quartic incomplete $2$-balls as those which are incomplete $2$-balls of some quartic graph, i.e., every quartic $B_2^\incomplete(v_0)$ has $d_{v_0}=d_{v_1}=...=d_{v_4}=4$ and $d_{v_5},...,d_{v_m} \le 4$.

For example, the incomplete $2$-ball $B_2^\incomplete(v_0)$ in Figure \ref{fig: ex_2ball} has the following representation:
\begin{align*}
B_2^\incomplete(v_0)=\bigg[\underbrace{\big[0,1,0,0,1,0\big]}_{\text{$S_1$ structure}}\ ,\ \underbrace{\big[[13],[13],[24],[2],[4]\big]}_{\text{$S_1$-$S_2$ structure}}\ ,\ \big[\ \big]\bigg]
\end{align*}

\begin{figure}[h!]
\begin{center}
\begin{tikzpicture}[scale=1.5]
	\tikzstyle{every node}=[draw, shape=circle, scale = 0.7, thick]
        \path(1:0cm)	node(v0) [text = black]{$v_0$};
	\path(0:1cm)	node(v1) [red, text =black]{$v_1$};
        \path(90:1cm)	node(v2) [red, text =black]{$v_2$};
	\path(180:1cm)	node(v4) [red, text =black]{$v_4$};
        \path(270:1cm)	node(v3) [red, text =black]{$v_3$};
	
	\path(315:2cm)	node(v5) [blue, text =black]{$v_5$};
	\path(135:2cm)	node(v6) [blue, text =black]{$v_6$};
	\path(180:2cm)	node(v7) [blue, text =black]{$v_7$};
	\path(90:2cm)	node(v8) [blue, text =black]{$v_8$};
	\path(240:2cm)	node(v9) [blue, text =black]{$v_9$};		
	\draw(v0) -- (v1)
	(v0) -- (v2)
	(v0) -- (v3)
	(v0) -- (v4)
	(v2) edge[red] (v4)
	(v1) edge[red] (v3)
	(v1) edge[blue] (v5) 
	(v3) edge[blue] (v5)
	(v1) edge[blue] (v6) 
	(v3) edge[blue] (v6)
	(v2) edge[blue] (v7) 
	(v4) edge[blue] (v7)
	(v2) edge[blue] (v8)
	(v4) edge[blue] (v9);	
\end{tikzpicture}
\caption{$B_2^\incomplete(v_0)=\Big[\big[0,1,0,0,1,0\big],\big[[13],[13],[24],[2],[4]\big],\big[\ \big]\Big]$}
\label{fig: ex_2ball}
\end{center}
\end{figure}

Concerning the $S_1$ structures, we will only consider the 11 standard
representations (given in the second column of Table
\ref{table:count_2ball}) since all other $S_1$ structures can be
obtained from them via permutations of the vertices $v_1,v_2,v_3,v_4$.

The computational results are presented in the following two propositions below. The first proposition gives the number of all non-isomorphic quartic incomplete $2$-balls as well as the ones with non-negative curvature at their center. The second proposition gives a list of all 22 quartic incomplete $2$-balls that are curvature sharp at their center.

\begin{proposition}
There are 365 non-isomorphic quartic incomplete $2$-balls $B_2^\incomplete(v_0)$. Among them, there are 204 quartic incomplete $2$-balls that have non-negative curvature $\mathcal K_\infty(v_0)$. For more details, see Table \ref{table:count_2ball}.
\end{proposition}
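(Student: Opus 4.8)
The plan is to establish the two numbers by a direct enumeration followed by a curvature computation carried out for each enumerated object. First I would fix the parametrisation of quartic incomplete $2$-balls already set up above: such a ball is the triple $[list_1, list_2, [\,]]$, where $list_1$ records the induced subgraph on $S_1(v_0) = \{v_1,\dots,v_4\}$ and $list_2$ records, for each vertex of $S_2(v_0)$, the non-empty set of its neighbours in $S_1(v_0)$. Up to permutations of $v_1,\dots,v_4$ there are exactly $11$ possibilities for the $S_1$ structure (the $11$ isomorphism types of graphs on four vertices), and these are the standard representations in the second column of Table \ref{table:count_2ball}. Since every vertex of $S_1(v_0)$ has degree $4$ in $G$, exactly one edge to $v_0$, and $\deg_{S_1}(v_i)$ spherical edges inside $S_1(v_0)$, its out-degree is forced to be $d^+_{v_0}(v_i) = 3 - \deg_{S_1}(v_i)$, which is non-negative for each of the eleven types, so all of them do occur.

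Next, for each of the eleven $S_1$ structures I would enumerate the admissible $S_1$-$S_2$ structures. Fixing the $S_1$ structure fixes the out-degrees $d^+_{v_0}(v_1),\dots,d^+_{v_0}(v_4)$, and choosing $list_2$ amounts to writing the multiset of out-stubs (with these prescribed multiplicities) as a disjoint union indexed by the vertices of $S_2(v_0)$, each labelled by the non-empty subset of $S_1(v_0)$ that it meets --- the constraint $d_{v_j}\le 4$ for $j\ge 5$ being automatic, since such a vertex meets at most four vertices of $S_1(v_0)$. Having listed all such multisets, I would then quotient by the relevant symmetry group: the automorphism group of the chosen $S_1$ graph acting on $\{v_1,\dots,v_4\}$, together with arbitrary relabellings of the interchangeable $S_2$ vertices $v_5,\dots,v_m$. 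Adding the eleven counts so obtained yields the total $365$, with the per-type breakdown recorded in Table \ref{table:count_2ball}; for instance the case $list_1 = K_4$ contributes the single ball $B_2^\incomplete(v_0) = B_2^\complete(v_0)$, which is the $2$-ball of $K_5$.

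For the second number, I would compute $\Kinfty(v_0)$ for each of the $365$ balls as explained in \cite[Section 3.4]{CKLLS17}: it is the supremum of all $K\in\RR$ for which the quadratic form $f \mapsto \Gamma_2(f,f)(v_0) - K\,\Gamma(f,f)(v_0)$ is positive semidefinite on the space of functions on $B_2(v_0)$, and since $\Gamma$ and $\Gamma_2$ at $v_0$ depend only on $B_2^\incomplete(v_0)$ this is a semidefinite programme of fixed small size. One then tests the sign of $\Kinfty(v_0)$; it turns out to be non-negative for exactly $204$ of the $365$ balls, again with the breakdown recorded in Table \ref{table:count_2ball}.

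The main obstacle is that the enumeration in the second step is sizeable and the symmetry reduction is delicate --- a double count or a missed isomorphism type would corrupt the final totals --- so in practice it must be carried out and double-checked by computer, which is precisely the content of \cite{GW18}. Useful consistency checks include recomputing the grand total by an independent traversal of the eleven cases, verifying the easy extremal case $list_1 = K_4$ by hand, and recomputing a sample of the curvature values with the interactive curvature calculator at \url{http://www.mas.ncl.ac.uk/graph-curvature}.
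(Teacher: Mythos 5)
Your proposal is correct and matches the paper's approach: the paper offers no independent argument for this proposition but simply reports the computer enumeration of \cite{GW18} (fixing the 11 standard $S_1$ structures, enumerating $S_1$-$S_2$ structures up to isomorphism, and computing $\Kinfty(v_0)$ by semidefinite programming), which is exactly the procedure you describe and defer to the same computation.
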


\begin{table}[h!]
  \begin{center}
    \resizebox{\columnwidth}{!}{
		\begin{tabular}{| l | l | l | p{2.8cm} | p{2.8cm} |}
			\hline
			Index & $S_1$ structure & $S_1$-out regular & Number of $2$-balls & Number of $2$-balls with $\mathcal K_\infty(v_0) \ge 0$\\ \hline
			1 & [0, 0, 0, 0, 0, 0] & True & 93 & 46 \\ \hline
                        2 & [1, 0, 0, 0, 0, 0] & False & 120 & 55 \\ \hline
			3 & [1, 0, 0, 0, 0, 1] & True & 40 & 24 \\ \hline
			4 & [1, 1, 0, 0, 0, 0] & False & 55 & 31 \\ \hline
			5 & [1, 1, 1, 0, 0, 0] & False & 8 & 8 \\ \hline
			6 & [1, 1, 0, 1, 0, 0] & False & 10 & 4 \\ \hline
                        7 & [1, 1, 0, 0, 1, 0] & False & 24 & 21 \\ \hline
			8 & [1, 1, 0, 0, 1, 1] & True & 7 & 7 \\ \hline
			9 & [1, 1, 1, 1, 0, 0] & False & 5 & 5 \\ \hline
			10 & [1, 1, 1, 1, 1, 0] & False & 2 & 2 \\ \hline
			11 & [1, 1, 1, 1, 1, 1] & True & 1 & 1 \\ \hline\hline
                 	\multicolumn{3}{|c|}{Total} & 365 & 204 \\ \hline
		\end{tabular}}
            \end{center}
            \vspace*{.1cm}
	\caption{Number of incomplete $2$-balls classified by their $S_1$ structure}
	\label{table:count_2ball}
\end{table}

In fact the above result is not used in our proof in Section \ref{sect:proof_main}. The following result, however, is crucial for the proof:

\begin{proposition}
  There are 22 non-isomorphic quartic incomplete $2$-balls $B_2^\incomplete(v_0)$ which are curvature sharp in $v_0$. They are listed in Table \ref{table: cs}.
\end{proposition}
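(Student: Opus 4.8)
The plan is to combine a short theoretical reduction with a finite enumeration and the curvature computation of \cite[Section~3.4]{CKLLS17}. The reduction rests on the fact recalled above from \cite[Corollary~5.11]{CLP17}: if $v_0$ is curvature sharp then $G$ is $S_1$-out regular at $v_0$. In a quartic graph each $v_i\in S_1(v_0)$ has exactly three neighbours besides $v_0$, so $d^{+}_{v_0}(v_i)=3-r'_i$, where $r'_i$ is the number of neighbours of $v_i$ inside $S_1(v_0)$; hence $S_1$-out regularity at $v_0$ is equivalent to the $S_1$ structure (the graph induced on $\{v_1,\dots,v_4\}$) being \emph{regular}. Up to relabelling $v_1,\dots,v_4$, the only regular graphs on four vertices are the empty graph, a perfect matching, the $4$-cycle, and $K_4$, i.e.\ the standard $S_1$ structures of index $1,3,8,11$ in Table~\ref{table:count_2ball}. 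Thus every curvature sharp quartic incomplete $2$-ball has one of these four $S_1$ structures, and it suffices to classify them there.

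For each of the four $S_1$ structures the common out-degree $r:=d^{+}_{v_0}(v_i)\in\{3,2,1,0\}$ is determined, and an $S_1$-$S_2$ structure is precisely a finite multiset of nonempty subsets of $\{v_1,\dots,v_4\}$ --- the neighbour sets of the vertices of $S_2(v_0)$ --- in which each $v_i$ occurs in exactly $r$ members (recall that incomplete $2$-balls have no edges inside $S_2$). Each such multiset is easily enumerated, and I would then pass to isomorphism classes under the combined action of the stabiliser of the $S_1$ structure in $\mathrm{Sym}(\{v_1,\dots,v_4\})$ together with arbitrary relabellings of $S_2$, keeping one representative per class. (For the $K_4$ case $r=0$, so $S_2=\emptyset$ and there is a unique ball.)

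For every representative $B_2^{\incomplete}(v_0)$ I would compute $\Kinfty(v_0)$ by solving the associated semidefinite program as in \cite[Section~3.4]{CKLLS17} and implemented in the interactive curvature calculator, and compare the value with the upper bound $2+\#_\Delta(v_0)/4$ from \eqref{eq:upcurvbd}; the balls attaining equality are by definition the curvature sharp ones. Carrying this out produces exactly the $22$ incomplete $2$-balls listed in Table~\ref{table: cs}. As an independent check, running the same curvature computation over the full list of all $365$ non-isomorphic quartic incomplete $2$-balls (the list underlying Table~\ref{table:count_2ball}) selects the same $22$ balls.

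The main obstacle is ensuring rigour at the two computational steps. Combinatorially, one must be sure the isomorphism reduction neither drops a case nor keeps duplicates; this is handled by a canonical-form routine and cross-validated by the count of $365$ coming from the independent full enumeration. Analytically, curvature sharpness is an \emph{equality} condition that numerical SDP solvers only approach approximately, so to make the classification rigorous each of the $22$ sharp balls should be certified by an explicit optimal primal/dual pair witnessing $\Kinfty(v_0)=2+\#_\Delta(v_0)/4$, while for every non-sharp ball it suffices to exhibit a single test function $f\colon V\to\RR$ with $\Gamma_2(f,f)(v_0)<\bigl(2+\#_\Delta(v_0)/4\bigr)\,\Gamma(f,f)(v_0)$, which rules it out.
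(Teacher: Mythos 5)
Your proposal is correct and takes essentially the same route as the paper: the proposition rests on the computer classification of \cite{GW18}, i.e.\ a finite enumeration of quartic incomplete $2$-balls organised by their $S_1$ structure (as in Table \ref{table:count_2ball}) followed by the semidefinite curvature computation of \cite[Section~3.4]{CKLLS17}, and your preliminary reduction to the four regular $S_1$ structures via \cite[Corollary~5.11]{CLP17} is consistent with the outcome in Table \ref{table: cs}. Your two refinements --- restricting the enumeration to $S_1$-out regular structures and certifying sharpness (resp.\ non-sharpness) by exact primal/dual data rather than numerical SDP output --- sharpen the rigour but do not constitute a different argument.
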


\begin{table}[h]
  \resizebox{\columnwidth}{!}{
  	\begin{tabular}{|c|l|l|l|c|}
		\hline
		$\#_\Delta(e),$ & \multicolumn{3}{ c| }{Incomplete $2$-ball $B_2^{\rm inc}(v_0)$} & $\mathcal K_\infty(v_0)$ \\ \cline{2-4}
		$v_0\in e\in E$ & Index & $S_1$ structure & $S_1$-$S_2$ structure & \\ \hline
		3 &1.1 & [1, 1, 1, 1, 1, 1] &$\emptyset$& $3.5$\\ \hline\hline
		
		&2.1 & [1, 1, 0, 0, 1, 1]& [1234]& \\ \cline{2-4}
		&2.2 & [1, 1, 0, 0, 1, 1]& [123], [4]& \\ \cline{2-4}
		&2.3 & [1, 1, 0, 0, 1, 1]& [12], [3], [4]& \\ \cline{2-4}
		2&2.4 & [1, 1, 0, 0, 1, 1]& [14], [2], [3]& $3.0$\\ \cline{2-4}
		&2.5 & [1, 1, 0, 0, 1, 1]& [1], [2], [3], [4]& \\ \cline{2-4}
		&2.6 & [1, 1, 0, 0, 1, 1]& [12], [34]& \\ \cline{2-4}
		&2.7 & [1, 1, 0, 0, 1, 1]& [14], [23] &\\ \hline\hline
		
		&3.1 & [1, 0, 0, 0, 0, 1] & [1234], [13], [24]& \\ \cline{2-4}
		1&3.2 & [1, 0, 0, 0, 0, 1]& [13], [13], [24], [24]& $2.5$ \\ \cline{2-4}
		&3.3 & [1, 0, 0, 0, 0, 1]& [13], [14], [23], [24]& \\ \cline{2-4}
		&3.4 & [1, 0, 0, 0, 0, 1]& [1234], [1234]& \\ \hline\hline
		
		&4.1 & [[0, 0, 0, 0, 0, 0]& [1234], [1234], [1], [2], [3], [4] & \\ \cline{2-4}
		&4.2 & [0, 0, 0, 0, 0, 0]& [1234], [1234], [12], [3], [4]& \\ \cline{2-4}
		&4.3 & [0, 0, 0, 0, 0, 0]& [1234], [1234], [123], [4]& \\ \cline{2-4}
		&4.4 & [0, 0, 0, 0, 0, 0] & [1234], [12], [13], [24], [34]& \\ \cline{2-4}
		0&4.5 & [0, 0, 0, 0, 0, 0] & [12], [13], [14], [23], [24], [34]& $2.0$\\ \cline{2-4}
		&4.6 & [0, 0, 0, 0, 0, 0] & [123], [123], [14], [24], [34]& \\ \cline{2-4}
		&4.7 & [0, 0, 0, 0, 0, 0] & [1234], [1234], [12], [34]& \\ \cline{2-4}
		&4.8 & [0, 0, 0, 0, 0, 0] & [1234], [123], [124], [34]& \\ \cline{2-4}
		&4.9 & [0, 0, 0, 0, 0, 0] & [123], [124], [134], [234]&  \\ \cline{2-4}
		&4.10 & [0, 0, 0, 0, 0, 0] & [1234], [1234], [1234]&  \\ \hline
	\end{tabular}}
      \vspace*{.1cm}
	\caption{Incomplete $2$-ball structures with a curvature sharp center}
	\label{table: cs}
\end{table}

\section{Proof of the classification theorem} \label{sect:proof_main}

\begin{proof}[Proof of Theorem \ref{thm:main}]
  Let us start with a reference vertex $v_0$ and
  $S_1(v_0) = \{v_1,v_2,v_3,v_4\}$. We will perform a case-by-case
  analysis of all 22 possible (non-isomorphic)
  $B_2^{\rm inc}(v_0)$-structures which are curvature sharp at $v_0$,
  provided in Table \ref{table: cs}.

  Since we assume all vertices $v$ to be curvature sharp, any
  incomplete $2$-ball $B_2^{\rm inc}(v)$ must be one of the 22
  possible types, but their types can differ from vertex to
  vertex. However, we will see a posteriori that each globally
  curvature sharp graph generated by these cases is vertex transitive
  and, therefore, the incomplete $2$-ball types of all its vertices
  coincide.

Moreover, Proposition \ref{prop: number_triangles} asserts that for every edge $e\in E$ the number of triangles containing $e$, $\#_\Delta(e)$, is uniform. We will therefore use the number $\#_\Delta(e)\in\{0,1,2,3\}$ for our case separation.
Table \ref{table: cs_graphs} provides an overview about all incomplete $2$-ball structures that lead to globally curvature sharp graphs.

\begin{table}[h]
    \resizebox{\columnwidth}{!}{
	\begin{tabular}{|c|l|l|l|c|}
		\hline
		$\#_\Delta(e),$ & \multicolumn{3}{ c| }{Incomplete $2$-ball $B_2^{\rm inc}(v_0)$} & resulting graph(s) \\ \cline{2-4}
		$v_0\in e\in E$ & Index & $S_1$ structure & $S_1$-$S_2$ structure & \\ \hline
		3 &1.1 & [1, 1, 1, 1, 1, 1] &$\emptyset$& $K_5$\\ \hline
		
		2&2.1 & [1, 1, 0, 0, 1, 1]& [1234]& $O$\\ \cline{1-5}
		
		1&3.3 & [1, 0, 0, 0, 0, 1]& [13], [14], [23], [24]& $K_3\times K_3$\\ \cline{1-5}
		
		&4.5 & [0, 0, 0, 0, 0, 0] & [12], [13], [14], [23], [24], [34]& $Cay(D_{14},S)$ and $Q^4$\\ \cline{2-5}
		0&4.6 & [0, 0, 0, 0, 0, 0] & [123], [123], [14], [24], [34]& $Cay(D_{12},S)$ \\ \cline{2-5}
		&4.9 & [0, 0, 0, 0, 0, 0] & [123], [124], [134], [234]& $C(10)$ \\ \cline{2-5}
		&4.10 & [0, 0, 0, 0, 0, 0] & [1234], [1234], [1234]& $K_{4,4}$ \\ \hline
	\end{tabular}}
      \vspace*{.1cm}
	\caption{Incomplete $2$-ball structures leading to globally curvature sharp graphs}
	\label{table: cs_graphs}
\end{table}

\subsection{Case $\#_\Delta(e)=3$ or $\#_\Delta(e)=2$}

In the case $\#_\Delta(e)=3$, the $S_1$-structure immediately implies that $G$ is the complete graph $K_5$.

Next we deal with the case $\#_\Delta(e)=2$. If $B_2^{\rm inc}(v_0)$ is of type 2.1, then $G$ is immediately the Octahedral graph $O$. Otherwise, if $B_2^{\rm inc}(v_0)$ is of type 2.2, 2.3, 2.4, 2.5, 2.6 or 2.7, the $S_1$-$S_2$ structure infers that $v_2$ and $v_4$ have no common neighbor in $S_2(v_0)$. Thus we have $\#_\Delta(\{v_2,v_4\})=1$, namely the triangle $\{v_0v_2v_4\}$; contradiction to $\#_\Delta(e)=2$.

\subsection{Case $\#_\Delta(e)=1$}

In the case $\#_\Delta(e)=1$, we will show that the incomplete $2$-ball $B_2^{\rm inc}(v_0)$ has only one possible structure, which is of type 3.3. Moreover, it leads to a unique graph $G$, namely the Cartesian product $K_3\times K_3$.

\subsubsection{Case $B_2^{\rm inc}(v_0)$ is of type 3.1} Denote $v_5,v_6,v_7\in S_2(v_0)$ with the patterns $$v_5\equiv [1234] \qquad v_6\equiv [13] \qquad v_7\equiv [24].$$ Then $\{v_0v_1v_2\}$ and $\{v_1v_2v_5\}$ are triangles, so $\#_\Delta(\{v_1,v_2\})\ge 2$; contradiction.

We purposedly use ``$\equiv$'' to describe the pattern of a vertex in a $2$-sphere to allow the possibility that two vertices may have the same pattern e.g., $v_5\equiv v_6\equiv [13]$ even though $v_5\not=v_6$.

\subsubsection{Case $B_2^{\rm inc}(v_0)$ is of type 3.2} Denote $v_5,v_6,v_7,v_8\in S_2(v_0)$ with the patterns $$v_5\equiv v_6\equiv [13] \qquad v_7\equiv v_8\equiv [24].$$ Note that $v_1$ has four neighbors $v_0$, $v_2$, $v_5$, $v_6$. Since $v_5$ is not a neighbor of $v_0$ and $v_2$, the fact that $\#_\Delta(\{v_1,v_5\})=1$ implies that $v_5$ is a neighbor of $v_6$. Now $\{v_1v_5v_6\}$ and $\{v_3v_5v_6\}$ are triangles, so $\#_\Delta(\{v_5,v_6\})\ge 2$; contradiction.

\subsubsection{Case $B_2^{\rm inc}(v_0)$ is of type 3.4} Denote $v_5,v_6\in S_2(v_0)$ with the patterns $v_5\equiv v_6\equiv [1234]$. Then $\{v_0v_1v_2\}$, $\{v_1v_2v_5\}$, and $\{v_1v_2v_6\}$ are triangles, so $\#_\Delta(\{v_1,v_2\})=3$; contradiction.

\subsubsection{Case $B_2^{\rm inc}(v_0)$ is of type 3.3 } Denote $v_5,v_6,v_7,v_8\in S_2(v_0)$ with the patterns $$v_5\equiv [13] \quad v_6\equiv [14] \quad v_7\equiv [23] \quad v_8\equiv [24].$$ Consider $v_1$ as a center with four neighbors $v_0$, $v_2$, $v_5$, $v_6$. Since $v_5\not\sim v_0$ and $v_5\not\sim v_2$, the fact that $\#_\Delta(\{v_1,v_5\})=1$ implies $v_5\sim v_6$. Similarly,\\
by centering at $v_2$, $\#_\Delta(\{v_2,v_7\})=1$ implies $v_7\sim v_8$.\\ By centering at $v_3$, $\#_\Delta(\{v_3,v_5\})=1$ implies $v_5\sim v_7$.\\ By centering at $v_4$, $\#_\Delta(\{v_4,v_6\})=1$ implies $v_6\sim v_8$.

Now $B_2^{\rm inc}(v_0)$ with additional edges $v_5\sim v_6 \sim v_8 \sim v_7 \sim v_5$ results in a quartic graph, which is in fact the Cartesian product $K_3\times K_3$ (see Figure \ref{fig:cart K_3}).

\begin{figure}[h!]
	\begin{center}
		\begin{tikzpicture} [scale=0.4]
		\node[above] at (0,0) {$v_0$}; \node[above] at (4,3) {$v_1$}; \node[below] at (4,-3) {$v_2$}; \node[above] at (6,0.9) {$v_3$}; \node[above right] at (10,3.9) {$v_5$}; \node[above right] at (10,-2.1) {$v_7$}; \node[above] at (12,0) {$v_4$}; \node[above] at (16,3) {$v_6$}; \node[below] at (16,-3) {$v_8$};	 
		
		\draw (0,0)--(4,3)--(4,-3)--(0,0);
		\draw (6,0.9)--(10,3.9)--(10,-2.1)--(6,0.9);
		\draw (12,0)--(16,3)--(16,-3)--(12,0);
		
		\draw (0,0)--(6,0.9)--(12,0)--(0,0);
		\draw (4,3)--(10,3.9)--(16,3)--(4,3);
		\draw (4,-3)--(10,-2.1)--(16,-3)--(4,-3);
		\end{tikzpicture}
		
	\end{center}
	\caption{Cartesian product $K_3\times K_3$}
	\label{fig:cart K_3}
\end{figure}
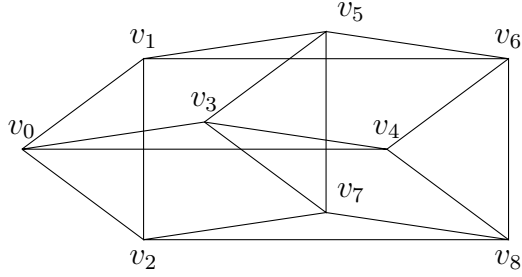

\subsection{Case $\#_\Delta(e)=0$}
Lastly, we deal with the most difficult case where $\#_\Delta(e)=0$ (i.e. $G$ is triangle-free) and we expect to derive $5$ possibilities of $G$, depending on its incomplete $2$-ball structure. Henceforth, we restrict ourselves to the ``bottom half" of Table \ref{table: cs}, that is the ones indexed by 4.1-4.10.

From now on, we introduce a new notation for $S_1$-$S_2$ structure of the $2$-ball $B_2^{\rm inc}(v_i)$, which is centered around the vertex $v_i$ (for $i\in\{1,2,3,4\}$). To do so, we add a subscript $i$ to the pattern of each vertex on the two-sphere $S_2(v_i)$.

For example, assuming $v_1$ has the neighbors $v_0,v_5,v_6,v_7$, the subscripts $1$ in the following patterns
\begin{align*} 
v_2\equiv [0567]_1 \qquad v_3\equiv [0567]_1 \qquad v_4\equiv [056]_1
\end{align*}
signifies that they describe vertices in the $S_1$-$S_2$ structure of $B_2^{\rm inc}(v_1)$.

Moreover, as in previous arguments, patterns are written without subscripts when we describe the $S_1$-$S_2$ structure of $B_2^{\rm inc}(v_0)$.

\subsubsection{Case $B_2^{\rm inc}(v_0)$ is of type 4.10}
It is straightforward to deduce that $G$ is the complete bipartite graph $K_{4,4}$.

\subsubsection{Case $B_2^{\rm inc}(v_0)$ is of type 4.1, 4.2, 4.3, or 4.7}

Note that all these cases have precisely two vertices in $S_2(v_0)$ with the patterns $[1234]$. Let us denote them by $v_5$ and $v_6$.

Consider $v_1$ as a center with three known neighbors $v_0,v_5,v_6$, and the other unknown neighbor, which we denote by $v_7$. Now $v_2,v_3,v_4$ are in $S_2(v_1)$ and all of them are neighbors of $v_0,v_5,v_6$, so they will have the following patterns:
\begin{align} \label{structure_1}
v_2\equiv [056*_2]_1 \qquad v_3\equiv [056*_3]_1 \qquad v_4\equiv [056*_4]_1
\end{align}
where (for $i\in\{2,3,4\}$) each $*_i$ takes value $7$ or ``empty", depending on whether $v_i$ is a neighbor of $v_7$ or not.

When comparing the patterns in \eqref{structure_1} to the $S_1$-$S_2$ structures in Table \ref{table: cs}, we can see that the possible structures of $B_2^{\rm inc}(v_1)$ are of type either 4.3 or 4.10. However, an incomplete $2$-ball of type 4.10 previously led to the resulting graph $G=K_{4,4}$. In case $B_2^{\rm inc}(v_1)$ is of type 4.3, two vertices have their patterns $[0567]_1$ and one vertex has its pattern $[056]_1$. Without loss of generality, let the patterns in \eqref{structure_1} take values 
\begin{align*} 
v_2\equiv [0567]_1 \qquad v_3\equiv [0567]_1 \qquad v_4\equiv [056]_1
\end{align*}
which means $v_4\not\sim v_7$. Now consider $v_4$ as a center, with neighbors $v_0,v_5,v_6$, and another neighbor denoted by $v_8 (\not=v_7)$. Note that due to the $S_1$ structure of $v_0$ being $[0,0,0,0,0,0]$, it means that $v_4$ is not a neighbor of $v_1,v_2,v_3$. Thus $v_1,v_2,v_3$ are in $S_2(v_4)$, and all of them are neighbors of $v_0,v_5,v_6,v_7$ (but not of $v_8$). Hence, $v_1,v_2,v_3$ have all the same pattern $[056]_4$, which does not belong to any $S_1$-$S_2$ structure in Table \ref{table: cs}.

\subsubsection{Case $B_2^{\rm inc}(v_0)$ is of type 4.4}
Denote the vertices on $S_2(v_0)$ by patterns
\begin{align*} 
v_5\equiv [1234] \qquad v_6\equiv [12] \qquad v_7\equiv [13] \qquad v_8\equiv [24] \qquad v_9\equiv [34].
\end{align*}
Consider $v_1$ as a center with four neighbors $v_0,v_5,v_6,v_7$. Now $v_2,v_3,v_4$ are in $S_2(v_1)$ with the patterns
\begin{align*} 
v_2\equiv [056]_1 \qquad v_3\equiv [057]_1 \qquad v_4\equiv [05]_1,
\end{align*}
which does not belong to any $S_1$-$S_2$ structure in Table \ref{table: cs}.

\subsubsection{Case $B_2^{\rm inc}(v_0)$ is of type 4.8}
Denote the vertices on $S_2(v_0)$ by patterns
\begin{align*} 
v_5\equiv [1234] \qquad v_6\equiv [123] \qquad v_7\equiv [124] \qquad v_8\equiv [34].
\end{align*}
Consider $v_3$ as a center with four neighbors $v_0,v_5,v_6,v_8$. Now $v_1,v_2,v_4$ are in $S_2(v_3)$ with the structure
\begin{align*} 
v_1\equiv [056]_3 \qquad v_2\equiv [056]_3 \qquad v_4\equiv [058]_3,
\end{align*}
which does not belong to any $S_1$-$S_2$ structure in Table \ref{table: cs}.

\subsubsection{Case $B_2^{\rm inc}(v_0)$ is of type 4.6} 
From now on, instead of calling the vertices on $S_2(v_0)$ as $v_5,v_6$ and so on, we call them differently by names reflecting their patterns.  

In this particular case, we will denote the vertices on $S_2(v_0)$ by patterns
\begin{align*} 
v_{123}\equiv v'_{123}\equiv [123] \qquad v_{14}\equiv [14] \qquad v_{24}\equiv [24] \qquad v_{34}\equiv [34].
\end{align*}

Consider $v_1$ as a center with four neighbors $v_0,v_{14},v_{123},v'_{123}$. Henceforth, we also describe patterns no longer just by the indices of the involved vertices but by the vertices themselves. In this case, the vertices $v_2,v_3,v_4$ are in $S_2(v_1)$ with patterns
\begin{align*} 
v_2\equiv v_3\equiv [v_0v_{123}v'_{123}]_1 \qquad v_4\equiv [v_0v_{14}]_1,
\end{align*}
and according to Table \ref{table: cs}, the $B_2^{\rm inc}(v_1)$ must be of type 4.6. That is, the other two vertices in $S_2(v_1)$, namely $A$ and $B$, will have patterns
\begin{align} \label{structure:v14_1} 
A\equiv [v_{123}v_{14}]_1 \qquad B\equiv [v'_{123}v_{14}]_1
\end{align}
In principle, it is possible that $A$ or $B$ could coincide with $v_{24}$ or $v_{34}$. However, this can be excluded by the following arguments. If $A=v_{24}$, we would have a triangle $\{v_4v_{14}v_{24}\}$, and if $A=v_{34}$, we would have a triangle $\{v_4v_{14}v_{34}\}$. The same reasoning applies to $B=v_{24}$ or $B=v_{34}$.

Next, consider $v_4$ as a center with four neighbors $v_0,v_{14},v_{24},v_{34}$. Now $v_1,v_2,v_3,A,B$ are in $S_2(v_4)$ with the patterns
\begin{align*} 
v_1\equiv [v_0v_{14}]_4 \qquad v_2\equiv [v_0v_{24}]_4 \qquad v_3\equiv [v_0v_{34}]_4 \qquad A\equiv [v_{14}*]_4 \qquad B\equiv [v_{14}*]_4,
\end{align*}
where each $*$ represents some unknown vertex/vertices. According to Table \ref{table: cs}, the only possible type of $B_2^{\rm inc}(v_4)$ is 4.6. That is, $A$ and $B$ are in $S_2(v_4)$ with patterns
\begin{align} \label{structure:v14_2}  
A\equiv B\equiv [v_{14}v_{24}v_{34}]_4.
\end{align}

The information \eqref{structure:v14_1} and \eqref{structure:v14_2}
tells us that we have a quartic graph as in Figure
\ref{fig:cayley_12}, which is indeed a Cayley graph of $D_{12}$ (see
Figure \ref{fig:cayley_12_proof} in Remark \ref{remark: cayley}
below).

\begin{figure}[h!]
\begin{center}
\begin{tikzpicture} [scale=0.4]
\node[below] at (0,0) {$v_0$}; \node[below] at (4,3) {$v_1$}; \node[below] at (4,1) {$v_2$}; \node[below] at (4,-1) {$v_3$}; \node[below] at (4,-3) {$v_4$};
\draw (0,0)--(4,3); \draw (0,0)--(4,1); \draw (0,0)--(4,-1); \draw (0,0)--(4,-3);

\node[below] at (8,4) {$v_{123}$}; \node[below] at (8,2) {$v'_{123}$}; \node[below] at (8,0) {$v_{14}$}; \node[below] at (8,-2) {$v_{24}$}; \node[below] at (8,-4) {$v_{34}$};
\draw (4,3)--(8,4); \draw (4,3)--(8,2); \draw (4,3)--(8,0); 
\draw (4,1)--(8,4); \draw (4,1)--(8,2); \draw (4,1)--(8,-2);
\draw (4,-1)--(8,4); \draw (4,-1)--(8,2); \draw (4,-1)--(8,-4);
\draw (4,-3)--(8,0); \draw (4,-3)--(8,-2); \draw (4,-3)--(8,-4);

\node[below] at (12,2) {$A$}; \node[below] at (12,-2) {$B$};
\draw (8,4)--(12,2); \draw (8,0)--(12,2); \draw (8,-2)--(12,2); \draw (8,-4)--(12,2);
\draw (8,2)--(12,-2); \draw (8,0)--(12,-2); \draw (8,-2)--(12,-2); \draw (8,-4)--(12,-2);
\end{tikzpicture}
\end{center}
\caption{The unique graph arising in case $B_2^{\rm inc}(v_0)$ is of type 4.6}
\label{fig:cayley_12}
\end{figure}
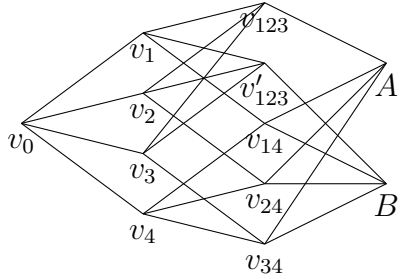

\subsubsection{Case $B_2^{\rm inc}(v_0)$ is of type 4.5} 
Denote the vertices on $S_2(v_0)$ by patterns
\begin{align*} 
v_{12}\equiv [12] \quad v_{13}\equiv [13] \quad v_{14}\equiv [14] \quad v_{23}\equiv [23] \quad v_{24}\equiv [24] \quad v_{34}\equiv [34].
\end{align*}

For each $i\in \{1,2,3,4\}$, consider $v_i$ as a center with four neighbors $v_0,v_{ij},v_{ik},v_{il}$ where $\{i,j,k,l\}=\{1,2,3,4\}$ (and from now on, $v_{ij}=v_{ji}$ by convention). The vertices $v_j,v_k,v_l$ are then in $S_2(v_i)$ with patterns

\begin{align*} 
v_j\equiv [v_0v_{ij}]_i \qquad v_k\equiv [v_0v_{ik}]_i  \qquad v_l\equiv [v_0v_{il}]_i.
\end{align*}

According to Table \ref{table: cs}, the type of $B_2^{\rm inc}(v_i)$ must be either 4.5 or 4.6 (and we can safely assume the type 4.5, since we previously dealt with the case where an incomplete $2$-ball is of type 4.6). 

Since $B_2^{\rm inc}(v_i)$ is of type 4.5, we suppose $S_2(v_i)=\{v_j,v_k,v_l,A_{jk}^i,A_{jl}^i,A_{kl}^i\}$ with patterns
\begin{align} \label{structure:case4.5_1}
v_j\equiv[v_0v_{ij}]_i \qquad v_k&\equiv[v_0v_{ik}]_i \qquad v_l\equiv[v_0v_{il}]_i \nonumber\\
A_{jk}^i\equiv [v_{ij}v_{ik}]_i \qquad A_{jl}^i&\equiv [v_{ij}v_{il}]_i \qquad A_{kl}^i\equiv [v_{ik}v_{il}]_i.
\end{align}
Here, $A_{jk}^i$ and $A_{kj}^i$ represent the same vertex. 

Note that none of the vertices $A_{jk}^i,A_{jl}^i,A_{kl}^i$ can coincide with the vertices $v_{jk},v_{jl},v_{kl}$, since this would always lead to the existence of some triangle, namely, $v_sv_{is}v_{st}$ for some distinct $s,t\in \{j,k,l\}$. In conclusion, $$A_{jk}^i,A_{jl}^i,A_{kl}^i \in S_3(v_0).$$

For every vertex $w\in S_3(v_0)$, we know that
\begin{equation} \label{eq:wvij}
w \sim v_{ij} \quad \text{for some distinct $i,j\in \{1,2,3,4\}$},
\end{equation}
and therefore $w\in S_2(v_i)\cap S_2(v_j)$. Since $w\in S_2(v_i)$, \eqref{structure:case4.5_1} implies that $w$ coincides with one of
$A_{jk}^i,A_{jl}^i,A_{kl}^i$, so it is adjacent to $v_{ij}$ by \eqref{eq:wvij} and
$v_{is}$ for some $s\not=j$.

Similarly, since $w\in S_2(v_j)$, $w$ is
also adjacent to $v_{ij}$ and $v_{jt}$ for some $t\not=i$. Therefore,
$w$ has at least $3$ different neighbors in $S_2(v_0)$, namely
$v_{ij}, v_{is}, v_{jt}$, so its in-degree (w.r.t. $v_0$)
is $$d^{-}_{v_0}(w)\ge 3 \qquad \mbox{ for all } w\in S_3(v_0).$$ On
the other hand, for every vertex $z\in S_2(v_0)$, its in-degree is
$d^{-}_{v_0}(z)= 2$, so its out-degree is $d^{+}_{v_0}(z)\le
2$. Counting edges between $S_2(v_0)$ and $S_3(v_0)$ then gives
\begin{align} \label{eq:count2-3} 
12 \ge \sum\limits_{z\in S_2(v_0)} d^{+}_{v_0}(z) = \sum\limits_{w\in S_3(v_0)} d^{-}_{v_0}(w) \ge 3|S_3(v_0)|,
\end{align}
so we have $|S_3(v_0)|\le 4$. Moreover, since $A_{jk}^i,A_{jl}^i,A_{kl}^i \in S_3(v_0)$, we must have $3\le |S_3(v_0)|\le 4$.

%
%
%
%

\begin{itemize}
\item \underline{Case $|S_3(v_0)|=3$}: then we know that
$S_3(v_0)=\{A_{jk}^i,A_{jl}^i,A_{kl}^i\}$. Observe also that $v_{ij}$ has four neighbors, namely $S_1(v_{ij})=\{v_i,v_j, A_{jk}^i,A_{jl}^i\}.$

Since these arguments hold for all indices $i,j$, we are allowed to interchange them, that is 
\begin{align*}
S_3(v_0)&=\{A_{jk}^i,A_{jl}^i,A_{kl}^i\}=\{A_{ik}^j,A_{il}^j,A_{kl}^j\}\\
S_1(v_{ij})&=\{v_i,v_j, A_{jk}^i,A_{jl}^i\}=\{v_i,v_j, A_{ik}^j,A_{il}^j\}.
\end{align*}
which implies that $A_{kl}^i$ and $A_{kl}^j$ coincide. By definition, it means that this vertex $A_{kl}^i\in S_3(v_0)$ is connected to $v_{ik},v_{il},v_{jk},v_{jl}$,  that is $$S_1(A_{kl}^i)=\{v_{ik},v_{il},v_{jk},v_{jl}\}.$$ Since this argument holds for all combinations of $i,j,k,l$, we derive the information about all three vertices $A_{23}^1,A_{24}^1,A_{34}^1\in S_3(v_0)$: 
\begin{align*}
S_1(A_{23}^1)&=\{v_{12},v_{13},v_{24},v_{34}\},\\
S_1(A_{24}^1)&=\{v_{12},v_{14},v_{23},v_{34}\},\\
S_1(A_{34}^1)&=\{v_{13},v_{14},v_{23},v_{24}\},
\end{align*}
which results in a quartic graph as in Figure \ref{fig:cayley_14},
which is indeed a Cayley graph of $D_{14}$ (see Figure
\ref{fig:cayley_14_proof} in Remark \ref{remark: cayley}).

\begin{figure}[h!]
	\begin{center}
		\begin{tikzpicture} [scale=0.4]
		\node[below] at (0,0) {$v_0$}; \node[below] at (4,3) {$v_1$}; \node[below] at (4,1) {$v_2$}; \node[below] at (4,-1) {$v_3$}; \node[below] at (4,-3) {$v_4$};
		\draw (0,0)--(4,3); \draw (0,0)--(4,1); \draw (0,0)--(4,-1); \draw (0,0)--(4,-3);
		
		\node[below] at (8,5) {$v_{12}$}; \node[below] at (8,3) {$v_{13}$}; \node[below] at (8,1) {$v_{14}$}; \node[below] at (8,-1) {$v_{23}$}; \node[below] at (8,-3) {$v_{24}$}; \node[below] at (8,-5) {$v_{34}$};
		
		\draw (4,3)--(8,5); \draw (4,3)--(8,3); \draw (4,3)--(8,1); 
		\draw (4,1)--(8,5); \draw (4,1)--(8,-1); \draw (4,1)--(8,-3);
		\draw (4,-1)--(8,3); \draw (4,-1)--(8,-1); \draw (4,-1)--(8,-5);
		\draw (4,-3)--(8,1); \draw (4,-3)--(8,-3); \draw (4,-3)--(8,-5);
		
		\node[right] at (12,2) {$A_{23}^1$}; \node[right] at (12,0) {$A_{24}^1$}; \node[right] at (12,-2) {$A_{34}^1$};
		
		\draw (8,5)--(12,2); \draw (8,5)--(12,0);
		\draw (8,3)--(12,2); \draw (8,3)--(12,-2);
		\draw (8,1)--(12,0); \draw (8,1)--(12,-2);
		\draw (8,-1)--(12,0); \draw (8,-1)--(12,-2);
		\draw (8,-3)--(12,2); \draw (8,-3)--(12,-2);
		\draw (8,-5)--(12,2); \draw (8,-5)--(12,0);	
		\end{tikzpicture}
	\end{center}
	\caption{The graph arising in case $B_2^{\rm inc}(v_0)$ is of type 4.6 and $|S_3(v_0)|=3$}
	\label{fig:cayley_14}
\end{figure}
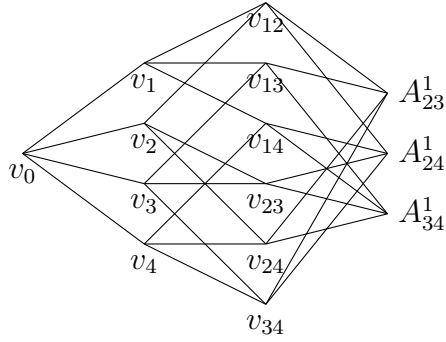

\item \underline{Case $|S_3(v_0)|=4$}: We claim that in this case our graph $G$ has diameter $\diam(G)\ge 4$. Assume for the sake of contradiction that $\diam(G)=3$ and the 4-sphere $S_4(v_0)$ is empty.

Since $|S_3(v_0)|=4$, the inequality \eqref{eq:count2-3} holds with equality, which implies that for all $w\in S_3(v_0)$, the in-degree $d^{-}_{v_0}(w)=3$. Thus the spherical degree $d^{0}_{v_0}(w)=1$. In particular, each of the vertices $A_{23}^1,A_{24}^1,A_{34}^1$ must be adjacent to another vertex in $S_3(v_0)$. However, note that no pair of vertices $A_{23}^1,A_{24}^1,A_{34}^1$ are adjacent (otherwise, if $A_{jk}^1$ and $A_{jl}^1$ were connected, then they would form a triangle $\Delta v_{1j}A_{jk}^1A_{jl}^1$). Thus $A_{23}^1,A_{24}^1,A_{34}^1$ must be adjacent to the the fourth vertex in $S_3(v_0)$, which we may denote by $P$. Then the spherical degree $d^{0}_{v_0}(P)=3$, contradiction.

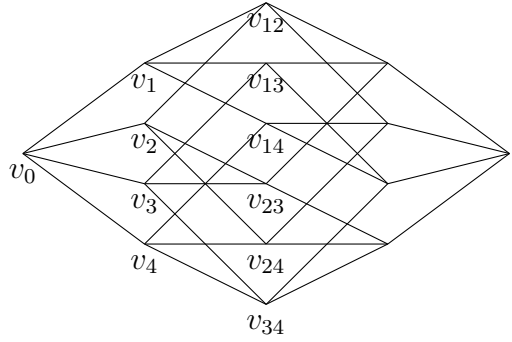
\begin{figure}[h!]
	\begin{center}
		\begin{tikzpicture} [scale=0.4]
		\node[below] at (0,0) {$v_0$}; \node[below] at (4,3) {$v_1$}; \node[below] at (4,1) {$v_2$}; \node[below] at (4,-1) {$v_3$}; \node[below] at (4,-3) {$v_4$};
		\draw (0,0)--(4,3); \draw (0,0)--(4,1); \draw (0,0)--(4,-1); \draw (0,0)--(4,-3);
		
		\node[below] at (8,5) {$v_{12}$}; \node[below] at (8,3) {$v_{13}$}; \node[below] at (8,1) {$v_{14}$}; \node[below] at (8,-1) {$v_{23}$}; \node[below] at (8,-3) {$v_{24}$}; \node[below] at (8,-5) {$v_{34}$};
		
		\draw (4,3)--(8,5); \draw (4,3)--(8,3); \draw (4,3)--(8,1); 
		\draw (4,1)--(8,5); \draw (4,1)--(8,-1); \draw (4,1)--(8,-3);
		\draw (4,-1)--(8,3); \draw (4,-1)--(8,-1); \draw (4,-1)--(8,-5);
		\draw (4,-3)--(8,1); \draw (4,-3)--(8,-3); \draw (4,-3)--(8,-5);
		
		
		\draw (8,5)--(12,3); \draw (8,5)--(12,1);
		\draw (8,3)--(12,3); \draw (8,3)--(12,-1);
		\draw (8,1)--(12,1); \draw (8,1)--(12,-1);
		\draw (8,-1)--(12,3); \draw (8,-1)--(12,-3);
		\draw (8,-3)--(12,1); \draw (8,-3)--(12,-3);
		\draw (8,-5)--(12,-1); \draw (8,-5)--(12,-3);
		
		\draw (12,3)--(16,0); \draw (12,1)--(16,0); 
		\draw (12,-1)--(16,0); \draw (12,-3)--(16,0);
			
		\end{tikzpicture}
	\end{center}
	\caption{The 4-dimensional hypercube $Q^4$}
	\label{fig:Q^4}
\end{figure}

Therefore we have shown that $\diam(G) \ge 4$. On the other hand, Theorem \ref{thm:BMrigidity} gives the following diameter bound for Bakry-\'Emery curvature: $$\diam(G)\le \frac{2D}{K}=\frac{2\cdot4}{2}=4,$$ as we are working with quartic graphs ($D=4$) and $K=\inf\limits_{x} \mathcal(K)_\infty(x)=2$ (see Table \ref{table: cs}). Since the graph $G$ has diameter $\diam(G)=4$, it must be the $4$-dimensional hypercube $Q^4$ (as illustrated in Figure \ref{fig:Q^4}) by the rigidity statement in Theorem \ref{thm:BMrigidity}.

\end{itemize}

\subsubsection{Case $B_2^{\rm inc}(v_0)$ is of type 4.9} 
Denote the vertices on $S_2(v_0)$ by patterns
\begin{align*} 
v_{123}\equiv[123] \quad\quad v_{124}\equiv[124] \quad\quad v_{134}\equiv[134] \quad\quad v_{234}\equiv[234].
\end{align*}

For any $\{i,j,k,l\}=\{1,2,3,4\}$, each vertex $v_{ijk}\in S_2(v_0)$
has in-degree $d^{-}_{v_0}(v_{ijk})=3$ and spherical-degree
$d^{0}_{v_0}(v_{ijk})=0$ (as $G$ is triangle-free), so $v_{ijk}$ must
be connected to exactly one vertex on $S_3(v_0)$ (depending on the choice of $i,j,k$) which we denote by
$A_{ijk} \in S_3(v_0)$. Note that these vertices
$A_{ijk}$ might coincide (in fact, we will see that they are all the
same vertex).

Consider $v_i$ as a center with four neighbors $v_0,v_{ijk},v_{ijl},v_{ikl}$. The vertices $v_j,v_k,v_l\in S_2(v_i)$ will have patterns
\begin{align*}
v_j\equiv[v_0v_{ijk}v_{ijl}]_i \qquad v_k\equiv[v_0v_{ijk}v_{ikl}]_i \qquad v_l\equiv[v_0v_{ijl}v_{ikl}]_i.
\end{align*}

\begin{figure}[h!]
	\begin{center}
		\begin{tikzpicture} [scale=0.6]
		\node[below] at (0,0) {$v_0$}; \node[below] at (4,3) {$v_1$}; \node[below] at (4,1) {$v_2$}; \node[below] at (4,-1) {$v_3$}; \node[below] at (4,-3) {$v_4$};
		\draw (0,0)--(4,3); \draw (0,0)--(4,1); \draw (0,0)--(4,-1); \draw (0,0)--(4,-3);
		
	    \node[below] at (8,3) {$v_{123}$}; \node[below] at (8,1) {$v_{124}$}; \node[below] at (8,-1) {$v_{134}$}; \node[below] at (8,-3) {$v_{234}$};
		
		\draw (4,3)--(8,3); \draw (4,3)--(8,1); \draw (4,3)--(8,-1); 
		\draw (4,1)--(8,3); \draw (4,1)--(8,1); \draw (4,1)--(8,-3);
		\draw (4,-1)--(8,3); \draw (4,-1)--(8,-1); \draw (4,-1)--(8,-3);
		\draw (4,-3)--(8,1); \draw (4,-3)--(8,-1); \draw (4,-3)--(8,-3);
		
		\node[below] at (12,0) {$A$};
		
		\draw (8,3)--(12,0); \draw (8,1)--(12,0); \draw (8,-1)--(12,0); \draw (8,-3)--(12,0);		
		\end{tikzpicture}
	\end{center}
	\caption{The crown graph $C(10)$}
	\label{fig:crown}
\end{figure}
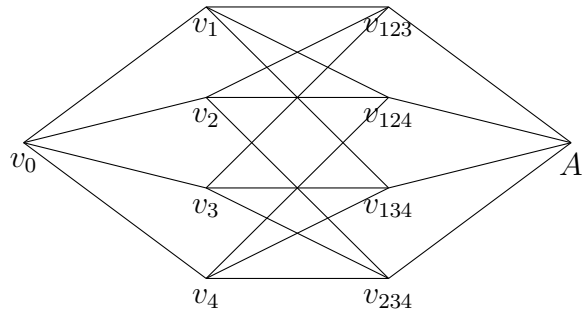

According to Table \ref{table: cs}, $B_2^{\rm inc}(v_i)$ must be of
type 4.9 and, therefore, $|S_2(v_i)|=4$. We denote the so far
unlabeled vertex of $S_2(v_i)$ by $A_i$ and we have $S_2(v_i) = \{v_j,v_k,v_l,A_i\}$ and $A_i$ will have a pattern
\begin{equation}\label{eq:A_i}
A_i\equiv[v_{ijk}v_{ijl}v_{ikl}]_i.
\end{equation}
Since $d(A_i,v_i)=2$ and $A_j \neq v_j,v_k,v_l$, we have
$$ A_i \not\in \{ v_0,v_i,v_j,v_k,v_{ijk},v_{ijl},v_{ikl} \}. $$
We can also rule out $A_i = v_{jkl}$ (for, otherwise,
$2=d(A_i,v_i)=d(v_{jkl},v_i)$ would imply that $A_i=v_{jkl}$ is
adjacent to one of the neighbors $v_{ijk},v_{ijl},v_{ikl}$ of $v_i$;
but any edge between two vertices of $v_{ijk},v_{ijl},v_{ikl},v_{jkl}$
would create a triangle and, therefore, a contradiction). These considerations
show that $A_i \in S_3(v_0)$ and $A_i \sim v_{ijk}$ by \eqref{eq:A_i}. By definition of the vertices $A_{ijk}$, we conclude that
$$A_i=A_{ijk} \qquad \mbox{for all permutations } \{i,j,k,l\}=\{1,2,3,4\}.$$
In particular,
\begin{align*}
A_1&=A_{123}=A_{124}=A_{134}\\
A_2&=A_{123}=A_{124}\qquad \qquad = A_{234},
\end{align*}
which means $A_{123},...,A_{234}$ all coincide, and $S_3(v_0)$ has only one vertex. As a result, $G$ is the crown graph $C(10)$, as shown in Figure \ref{fig:crown}.

After consideration of all possible cases we have now completed our classification result of quartic curvature sharp graphs (8 of them in total).
\end{proof}

\begin{remark} \label{remark: cayley}
Figures \ref{fig:cayley_12_proof} and \ref{fig:cayley_14_proof} illustrate that the graphs in (vi) and (vii) of Theorem \ref{thm:main} have indeed the stated Cayley graph structure.

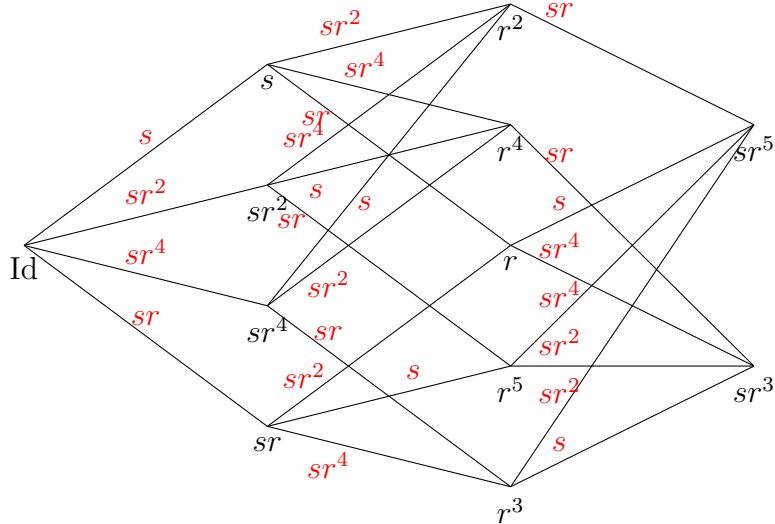
\begin{figure}[h!]
	\begin{center}
		\begin{tikzpicture} [scale=0.8]
		\node[below] at (0,0) {${\rm Id}$}; \node[below] at (4,3) {$s$}; \node[below] at (4,1) {$sr^2$}; \node[below] at (4,-1) {$sr^4$}; \node[below] at (4,-3) {$sr$};
                \draw (0,0)--(4,3) node [above,midway,red] {$s$};
		\draw (0,0)--(4,1) node [above,midway,red] {$sr^2$}; 
		\draw (0,0)--(4,-1) node [above,midway,red] {$sr^4$}; 
		\draw (0,0)--(4,-3) node [above,midway,red] {$sr$};
		
		\node[below] at (8,4) {$r^2$}; \node[below] at (8,2) {$r^4$}; \node[below] at (8,0) {$r$}; \node[below] at (8,-2) {$r^5$}; \node[below] at (8,-4) {$r^3$};
                \draw (4,3)--(8,4) node [above,pos=0.3,red] {$sr^2$};
		\draw (4,3)--(8,2) node [above,pos=0.4,red] {$sr^4$};
		\draw (4,3)--(8,0) node [below,pos=0.2,red] {$sr$};
		\draw (4,1)--(8,4) node [above,pos=0.15,red] {$sr^4$}; 
		\draw (4,1)--(8,2) node [below,pos=0.2,red] {$s$}; 
		\draw (4,1)--(8,-2) node [below,pos=0.1,red] {$sr$};
		\draw (4,-1)--(8,4) node [below,pos=0.4,red] {$s$}; 
		\draw (4,-1)--(8,2) node [below,pos=0.25,red] {$sr^2$}; 
		\draw (4,-1)--(8,-4) node [above,pos=0.25,red] {$sr$};
		\draw (4,-3)--(8,0) node [above,pos=0.15,red] {$sr^2$};
		\draw (4,-3)--(8,-2) node [above,pos=0.6,red] {$s$}; 
		\draw (4,-3)--(8,-4) node [below,pos=0.25,red] {$sr^4$};
		
		\node[below] at (12,2) {$sr^5$}; \node[below] at (12,-2) {$sr^3$};
		\draw (8,4)--(12,2) node [above,pos=0.2,red] {$sr$}; 
		\draw (8,0)--(12,2) node [above,pos=0.2,red] {$s$}; 
		\draw (8,-2)--(12,2) node [above,pos=0.2,red] {$sr^4$};
		\draw (8,-4)--(12,2) node [above,pos=0.2,red] {$sr^2$};
		\draw (8,2)--(12,-2) node [above,pos=0.2,red] {$sr$};
		\draw (8,0)--(12,-2) node [above,pos=0.2,red] {$sr^4$}; 
		\draw (8,-2)--(12,-2) node [above,pos=0.2,red] {$sr^2$}; 
		\draw (8,-4)--(12,-2) node [above,pos=0.2,red] {$s$};
		\end{tikzpicture}
	\end{center}
	\caption{The Cayley graph $Cay(D_{12},S)$ with $S=\{r^3,s,sr^2,sr^4\}$}
	\label{fig:cayley_12_proof}
\end{figure}

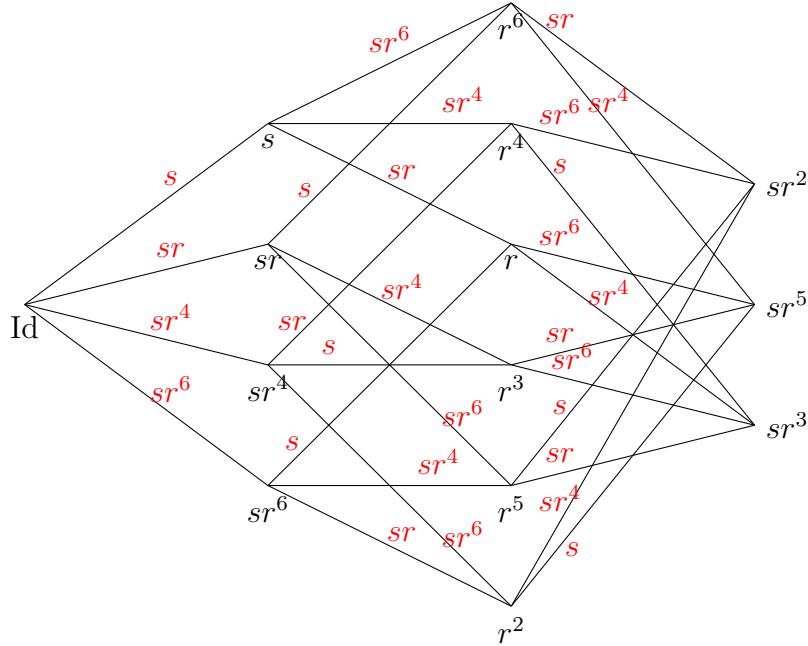
\begin{figure}[h!]
	\begin{center}
		\begin{tikzpicture} [scale=0.8]
		\node[below] at (0,0) {${\rm Id}$}; \node[below] at (4,3) {$s$}; \node[below] at (4,1) {$sr$}; \node[below] at (4,-1) {$sr^4$}; \node[below] at (4,-3) {$sr^6$};
		\draw (0,0)--(4,3)  node [above, pos=0.6,red] {$s$};
                \draw (0,0)--(4,1)  node [above, pos=0.6,red] {$sr$};
                \draw (0,0)--(4,-1) node [above, pos=0.6,red] {$sr^4$};
                \draw (0,0)--(4,-3) node [above, pos=0.6,red] {$sr^6$};
		
		\node[below] at (8,5) {$r^6$}; \node[below] at (8,3) {$r^4$}; \node[below] at (8,1) {$r$}; \node[below] at (8,-1) {$r^3$}; \node[below] at (8,-3) {$r^5$}; \node[below] at (8,-5) {$r^2$};
		
		\draw (4,3)--(8,5) node [above, pos=0.5,red] {$sr^6$};
                \draw (4,3)--(8,3) node [above, pos=0.8,red] {$sr^4$};
                \draw (4,3)--(8,1) node [above, pos=0.55,red] {$sr$}; 
		\draw (4,1)--(8,5) node [above, pos=0.15,red] {$s$};
                \draw (4,1)--(8,-1) node [above, pos=0.55,red] {$sr^4$};
                \draw (4,1)--(8,-3) node [above, pos=0.8,red] {$sr^6$};
		\draw (4,-1)--(8,3) node [above, pos=0.1,red] {$sr$};
                \draw (4,-1)--(8,-1) node [above, pos=0.25,red] {$s$};
                \draw (4,-1)--(8,-5) node [above, pos=0.8,red] {$sr^6$};
		\draw (4,-3)--(8,1) node [above, pos=0.1,red] {$s$};
                \draw (4,-3)--(8,-3) node [above, pos=0.7,red] {$sr^4$};
                \draw (4,-3)--(8,-5) node [above, pos=0.55,red] {$sr$};
		
		\node[right] at (12,2) {$sr^2$}; \node[right] at (12,0) {$sr^5$}; \node[right] at (12,-2) {$sr^3$};
		
		\draw (8,5)--(12,2) node [above, pos=0.2,red] {$sr$};
                \draw (8,5)--(12,0) node [above, pos=0.4,red] {$sr^4$};
		\draw (8,3)--(12,2) node [above, pos=0.2,red] {$sr^6$};
                \draw (8,3)--(12,-2) node [above, pos=0.2,red] {$s$};
		\draw (8,1)--(12,0) node [above, pos=0.2,red] {$sr^6$};
                \draw (8,1)--(12,-2) node [above, pos=0.4,red] {$sr^4$};
		\draw (8,-1)--(12,0) node [above, pos=0.2,red] {$sr$};
                \draw (8,-1)--(12,-2) node [above, pos=0.25,red] {$sr^6$};
		\draw (8,-3)--(12,2) node [above, pos=0.2,red] {$s$};
                \draw (8,-3)--(12,-2) node [above, pos=0.2,red] {$sr$};
		\draw (8,-5)--(12,2) node [above, pos=0.2,red] {$sr^4$};
                \draw (8,-5)--(12,0) node [below, pos=0.25,red] {$s$};	
		\end{tikzpicture}
	\end{center}
	\caption{The Cayley graph $Cay(D_{14},S)$ with $S = \{ s,sr,sr^4,sr^6 \}$}
	\label{fig:cayley_14_proof}
\end{figure}

\end{remark}

{\bf Acknowledgement:} Leyna Watson May's research was funded by the
London Mathematical Society (LMS) in a 2018 Undergraduate
Research Bursary. We thank Francis Gurr (Watson May's co-author
in \cite{GW18}) for his support in creating the relevant Python code
and\\ Riikka Kangaslampi for helpful mathematical discussions.


\end{document}